\documentclass[preprint,12pt]{elsarticle}

\usepackage{lipsum}
\usepackage{amsfonts}
\usepackage{graphicx}
\usepackage{epstopdf}
\usepackage{algorithmic}
\ifpdf
\DeclareGraphicsExtensions{.eps,.pdf,.png,.jpg}
\else
\DeclareGraphicsExtensions{.eps}
\fi

\usepackage{comment}

\usepackage{graphicx}
\usepackage{color}        
\usepackage{mdframed}
\usepackage{subfigure}

\usepackage{amsmath,amssymb,mathrsfs}
\usepackage{mathtools,xparse}

\newcommand{\pushright}[1]{\ifmeasuring@#1\else\omit\hfill
   $\displaystyle#1$\fi\ignorespaces}
\makeatother

\DeclarePairedDelimiter{\norm}{\lVert}{\rVert}
\newcommand{\rfb}[1]{\mbox{\rm
		(\ref{#1})}\ifx\undefined\stillediting\else:\fbox{$#1$}\fi} 
\newcommand{\rarrow} {\mathop{\rightarrow}}                
\newcommand{\FORALL} {{\hbox{$\hskip 11mm \forall \;$}}}
\newcommand{\e}      {{\varepsilon}}
\newcommand{\m}      {{\hbox{\hskip 1pt}}}
\newcommand{\mm}     {{\hbox{\hskip 0.5pt}}}

\renewcommand{\l}    {{\lambda}}
\renewcommand{\L}    {{\Lambda}}
\renewcommand{\Re}   {{\rm Re\,}}
\newcommand{\nline}  {{\mathbb N}}
\newcommand{\rline}  {{\mathbb R}}
\newcommand{\R}      {{\mathbb R}}
\newcommand{\N}      {{\mathbb N}}
\newcommand{\tline}  {{\mathbb T}}

\newcommand{\dd}     {{\rm d}\hbox{\hskip 0.5pt}}
\newcommand{\sbluff} {{\hbox{\raise 9pt \hbox{\mm}}}}
\newcommand{\Dscr}   {{\cal D}}
\newcommand{\Lscr}   {{\cal L}}

\newcommand{\half}   {{\frac{1}{2}}}

\newcommand{\ed}[1]{{\color{blue} #1}}

\numberwithin{equation}{section}

\newtheorem{theorem}{Theorem}[section]
\newtheorem{remark}[theorem]{Remark}
\newtheorem{proposition}[theorem]{Proposition}
\newtheorem{lemma}[theorem]{Lemma}
\newtheorem{definition}[theorem]{Definition}
\newtheorem{corollary}[theorem]{Corollary}

\journal{Systems \& Control Letters}

\begin{document}

\begin{frontmatter}



\title{The strong stability of the Perron-Frobenius semigroup 
       and almost global attractivity}


\author{Pietro Lorenzetti and George Weiss}

\address{School of Electrical Engineering, Tel Aviv University, 
Ramat Aviv 69978, Israel,
emails: plorenzetti@tauex.tau.ac.il, gweiss@tauex.tau.ac.il}

\begin{abstract}
We discuss some useful properties of the solution map (flow) of a
nonlinear dynamical system with a finite-dimensional state space.
Then, we introduce the Perron-Frobenius semigroup, and we prove that
it is a positive strongly continuous semigroup of contractions. We
show that, given a nonlinear system and an invariant set, this set is
an almost global attractor if and only if certain Perron-Frobenius
semigroups associated to the nonlinear system are strongly stable.
Unlike other works on the Perron-Frobenius semigroup from the
literature, we do not require the existence of a compact and invariant
state-space for the dynamical system, we allow trajectories with
finite escape time, and we do not require the attractor to be locally
(Lyapunov) stable. Two simple examples are used throughout the paper
to illustrate the theory.
\end{abstract}

\begin{keyword} almost global asymptotic stability, almost global 
attractor, Perron-Frobenius semigroup, density function.
\end{keyword}

\end{frontmatter}

\section{Introduction} \label{sec:1} 

Many nonlinear systems (e.g., pendulum-like systems
\cite{Reitmann1992} or power systems \cite{Bullo2019}) unavoidably
possess unstable equilibrium points. For such systems, global
asymptotic stability is not possible and the best we can hope for is
that the system has an almost global attractor \cite{Angeli2004,
Rantzer2001,Rajaram2010}. We say that an invariant set $\L$ is an
\textit{almost global attractor} if, for almost every initial
condition (in the sense of Lebesgue measure), the corresponding state
trajectory converges to $\L$. The set $\L$ may or may not contain
equilibrium points.

A simple example of a system with an almost global attractor is a
rotating pendulum with viscous friction \cite{Reitmann1992}. For
instance, a possible choice is the set of all of the (infinitely many)
stable equilibrium points. A more involved example is a grid-connected
synchronous generator (with constant excitation current), which has
been proved to possess an almost global attractor, see, e.g.,
\cite{Bara2017,Lorenzetti2022TPS,Natarajan2018}. Further examples of
such systems are phase-locked loops, as proved in \cite{Angeli2004,
Angeli2003,LorRei24, Rantzer2001CDC,Zonetti2022}. We believe that
other meaningful examples of systems with almost global attractors may
arise in the context of power systems \cite{Lorenzetti2023}. As
discussed in \cite{Bullo2019,Dorfler2012,art153,Weiss2019}, a power
system can be modeled as a system of coupled oscillators with multiple
stable and unstable equilibrium points. In particular, in
\cite{Dorfler2012,Weiss2019} the region of attraction of a preferred
equilibrium point for the network reduced power system (NRPS) model of
a power system is investigated. However, it is still unclear under
which conditions (if any) does the NRPS model possess an almost global
attractor.

We investigate the relationship between the stability properties of
the Perron-Frobenius semigroup and the existence of an almost global
attractor for nonlinear continuous-time systems. Our contribution is
twofold. First, under mild regularity conditions, we prove that the
Perron-Frobenius semigroup is a positive strongly continuous semigroup
of contractions\footnote{We adopt here the terminology used in
Functional Analysis (see, e.g., \cite{Brezis2011}), i.e., given a
mapping $T:X\to Y$ such that $\|T(x_1)-T(x_2)\|_Y\leq\rho\|x_1-x_2
\|_X$, we say that $T$ is a \textit{contraction} if $0<\rho\leq 1$.
According to the terminology of Variational Analysis (see, e.g.,
\cite{Rockafellar2009}), this property is referred to as
\textit{non-expansiveness}.}.  Second, we prove the following: Given a
nonlinear system and an invariant set $\L\subset\rline^n$, the strong
stability of the family of Perron-Frobenius semigroups (defined
outside certain invariant sets containing $\Lambda$) is equivalent to
$\Lambda$ being an almost global attractor. Related results can be
found in \cite{Vaidya2008} for discrete-time systems, and in
\cite{Mauroy2013}, where it is shown that the stability of the Koopman
operator is equivalent to the global asymptotic stability of an
attractor for continuous-time systems. Besides, the relationship
between the Perron-Frobenius semigroup and the almost global
attractivity property of a set is studied in \cite{Rajaram2015,
Rajaram2013,Rajaram2010}, under the following assumptions: the flow of
the system is nonsingular in the sense of 
\cite[Definition~3.2.2]{Lasota1998}, there exists a compact forward
invariant set for the system trajectories, and the attractor is
locally stable in the sense of Lyapunov (almost everywhere locally
stable in \cite{Vaidya2008,Vaidya2010}). Similar assumptions are in
\cite{Mauroy2013}. The work \cite{Karabacak2018} proposes to relax
some of these strong requirements by removing the assumption on the
local stability of the attractor, and by replacing the existence of a
compact forward invariant set with the existence of trajectories for
all time and for all initial conditions. Here, none of the above is
assumed: Trajectories are allowed to exist locally, there is no
compact forward invariant set for the flow of the system, and the
attractor need not be locally stable.

The paper is organized as follows. In Section~\ref{sec:2}, we
introduce some preliminaries on measure theory, which lay the
foundation of our work. In Section~\ref{sec:prob}, we discuss some
useful properties of the solution map (flow) of a dynamical system. In
Section~\ref{sec:aGAS}, we define the property of almost global
attractivity of a set, which is the object of our studies. In
Section~\ref{sec:3}, we define the Perron-Frobenius semigroup and we
prove that it is a positive strongly continuous semigroup of
contractions. Finally, we present our main result in
Section~\ref{sec:4}, and we provide some concluding remarks in
Section~\ref{sec:5}.

\section{Preliminaries on Measure Theory} \label{sec:2} 

We assume that the reader is familiar with the basic concepts from
measure theory, such as $\sigma$-algebra, (non-negative) measure,
measure space, $\sigma$-finite measure space, measurable function,
Borel and Lebesgue measurable sets in $\R^n$, Lebesgue measure. Good
introductions on measure theory can be found in
\cite{Cohn2013,Lasota1998,Rudin1987}.  We recall here some facts that
will be needed.

\begin{definition}
Let $(X,\mathcal{A},m)$ be a measure space. We define
$\mathscr{L}^1(X,\mathcal{A},m)$ as the vector space of all measurable
functions $f:X\to \R$ satisfying
\begin{equation}\label{eq:L_1}
   \|f\|_1 \m=\m \int_X|f(x)| m(\dd x)<\infty.
\end{equation}
\end{definition}

We identify two elements $f,g\in\mathscr{L}^1(X,A,m)$ if
$\|f-g\|_1=0$, or, equivalently, if the set of points $x\in X$ where
$f(x)\neq g(x)$ has measure zero. The space of equivalence classes
obtained in this way is denoted by $L^1(X,A,m)$. With the norm
inherited from \rfb{eq:L_1}, $L^1(X,A,m)$ is a Banach space.

\begin{definition}\label{def:abs_cont}
Let $(X,\mathcal{A},m)$ be a measure space. The measure $\nu$ on 
$\mathcal{A}$ is said to be \textit{absolutely continuous} with 
respect to $m$ if, for every $A\in\mathcal{A}$, \vspace{-1.5mm}
$$ m(A) = 0 \quad \Longrightarrow \quad \nu(A) = 0.$$
\end{definition}

Now we state the Radon-Nikodym theorem, which is a fundamental result
in measure theory. The proof can be found, e.g., in
\cite[Section~4.2]{Cohn2013}.

\begin{theorem} \label{thm:Radon}
Let $(X,\mathcal{A},m)$ be a $\sigma$-finite measure space. If the
measure $\nu$ is absolutely continuous with respect to $m$, then there
exists a unique $f_\nu\in L^1(X,\mathcal{A},m)$, with $f_\nu\geq 0$,
such that \vspace{-1.5mm}
$$\nu(A) \m=\m \int_Af_\nu(x)m(\dd x) \FORALL A\in\mathcal{A}.$$
\end{theorem}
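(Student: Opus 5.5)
The plan is the classical Hilbert-space argument of von Neumann. Throughout I read the statement as concerning a finite measure $\nu$ (i.e.\ $\nu(X)<\infty$); this is implicit in the requirement $f_\nu\in L^1(X,\mathcal{A},m)$, since taking $A=X$ gives $\|f_\nu\|_1=\nu(X)$. The argument first treats the case $m(X)<\infty$ and then extends to the $\sigma$-finite case by gluing.

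\emph{Step 1 (finite $m$).} Put $\rho=m+\nu$, a finite measure. On the real Hilbert space $L^2(X,\mathcal{A},\rho)$, consider the linear functional $\Phi(g)=\int_X g\,\dd\nu$. By Cauchy--Schwarz,
$$|\Phi(g)|\le \int|g|\,\dd\rho\le \rho(X)^{1/2}\|g\|_{L^2(\rho)}.$$
Hence $\Phi$ is bounded and well defined on equivalence classes, because $\rho$-null sets are $\nu$-null. The Riesz representation theorem gives $h\in L^2(\rho)$ with
$$\int g\,\dd\nu=\int g h\,\dd\rho \quad\text{for all } g\in L^2(\rho),$$
or equivalently
$$\int g(1-h)\,\dd\nu=\int g h\,\dd m .$$
Testing with $g=\mathbf 1_{\{h<0\}}$ and with $g=\mathbf 1_{\{h>1\}}$ shows that $0\le h\le 1$ $\rho$-a.e.

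Let $B=\{h=1\}$. Taking $g=\mathbf 1_B$ gives $m(B)=0$, and therefore $\nu(B)=0$ by absolute continuity (Definition~\ref{def:abs_cont}). This is the only place where absolute continuity is used.

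Next take $g=(1+h+\dots+h^{k-1})\mathbf 1_{A\setminus B}$, which is bounded. This yields
$$\int_{A\setminus B}(1-h^k)\,\dd\nu=\int_{A\setminus B}h(1+\dots+h^{k-1})\,\dd m .$$
Let $k\to\infty$ and use monotone convergence on both sides; note that $h^k\downarrow 0$ on $B^c$. Together with $\nu(B)=m(B)=0$, this gives $\nu(A)=\int_A f\,\dd m$, where $f=h/(1-h)$ on $B^c$ and $f=0$ on $B$. Clearly $f\ge 0$, and $\int_X f\,\dd m=\nu(X)<\infty$, so $f\in L^1$.

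\emph{Step 2 ($\sigma$-finite $m$).} Write $X=\bigcup_k X_k$ as a disjoint union with $m(X_k)<\infty$. Apply Step 1 to the restrictions of $m$ and $\nu$ to each $X_k$; absolute continuity is inherited by the restrictions. This gives densities $f_k\ge0$ on $X_k$. Define $f_\nu=f_k$ on $X_k$. Countable additivity of $\nu$ and monotone convergence then give $\nu(A)=\sum_k\nu(A\cap X_k)=\int_A f_\nu\,\dd m$ for every $A\in\mathcal{A}$, and $\|f_\nu\|_1=\nu(X)<\infty$.

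\emph{Step 3 (uniqueness).} Suppose $f,g\ge0$ are two such densities, both in $L^1$. Then $\int_A(f-g)\,\dd m=0$ for all $A\in\mathcal{A}$. Taking $A=\{f>g\}$ gives $f\le g$ a.e., and taking $A=\{g>f\}$ gives the reverse. Hence $f=g$ in $L^1$.

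I expect the main obstacle to be Step 1, specifically handling the set $B=\{h=1\}$, since $1/(1-h)$ blows up there. The remaining delicate points are routine: checking that the Riesz representative can be chosen with $0\le h\le1$ everywhere, not only almost everywhere, and keeping track of the finiteness assumption on $\nu$, without which $f_\nu$ would be finite a.e.\ but not integrable.
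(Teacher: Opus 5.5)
Your proof is correct, but it takes a different route from the one the paper relies on. The paper does not prove the theorem; it refers to \cite[Section~4.2]{Cohn2013}, where the result is obtained by a measure-theoretic extremal argument. There, for finite $m$ and $\nu$, one considers the family of measurable $g\geq 0$ with $\int_A g\,\dd m\leq\nu(A)$ for all $A\in\mathcal{A}$. Using closure under pointwise maxima and monotone convergence, one shows that $\sup\int_X g\,\dd m$ over this family is attained by some $g_0$. A Hahn decomposition of $\nu-\int_{(\cdot)}g_0\,\dd m-\e m$ then shows that a nonzero remainder would allow enlarging $g_0$ by $\e\mathbf{1}_E$ with $m(E)>0$, contradicting maximality. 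The passage to $\sigma$-finite $m$ and the uniqueness step are essentially your Steps~2 and~3.

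Your von Neumann argument replaces the Hahn decomposition and the maximization by the Riesz representation theorem on $L^2(m+\nu)$. It is shorter and yields the density explicitly as $h/(1-h)$. If absolute continuity is dropped, the set $B=\{h=1\}$ also gives the Lebesgue decomposition at no extra cost. The price is the use of Hilbert-space duality, whereas Cohn's proof stays within measure theory. It also handles $\sigma$-finite $\nu$ directly, producing a finite-valued density that lies in $L^1$ exactly when $\nu(X)<\infty$.

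Your explicit assumption $\nu(X)<\infty$ is therefore not a weakening but a necessary repair of the statement as written: for $\nu=m=$ Lebesgue measure on $\R$, the only density is $1\notin L^1$. It costs the paper nothing, since the measure $A\mapsto\int_{\phi_t^{-1}(A)}\rho\,\dd x$ in Definition~\ref{def:F-P} has total mass at most $\|\rho\|_1$.
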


The function $f_\nu$ appearing in the above theorem is called the
\textit{Radon-Nikodym derivative} of $\nu$ with respect to $m$.

\section{The Solution Map (Flow) of a Dynamical System}
\label{sec:prob} 

We work in the measure space $(\R^n,\Lscr(\R^n),\mu)$, where $\Lscr
(\R^n)$ denotes the Lebesgue $\sigma$-algebra in $\R^n$ and $\mu$ 
denotes the Lebesgue measure, see \cite[Section~1.3]{Cohn2013}. Our 
results can be extended to any measure $\nu$, which is absolutely 
continuous with respect to $\mu$. Consider the dynamical system 
described by \vspace{-1.5mm}
\begin{equation} \label{eq:ODE}
   \dot{x} \m=\m f(x),
\end{equation}
with $f\in C^1(\R^n;\R^n)$. We denote by $\phi_t(x_0)$ the
\textit{flow} (solution map) of the system \rfb{eq:ODE} from an
initial condition $x_0\in\R^n$, see \cite[Section~2.5]{Perko1998}.
Since $f\in C^1(\R^n;\R^n)$, for each fixed $x_0\in\R^n$ there exists
a maximal open interval $I_{x_0}\subset\R$ such that $0\in I_{x_0}$
and the system \rfb{eq:ODE} has a unique solution $x(t)=\phi_t(x_0)$
defined on $I_{x_0}$, with $x(0)=\phi_0(x_0)=x_0$, see \cite[Th.~1,
Sec.~2.4]{Perko1998}. For each fixed $t\in\R$, the flow $\phi_t$ is
defined on an open subset $D_t\subset\R^n$, $D_t=\{x_0\in\R^n 
\ | \ t\in I_{x_0}\}$, and $\phi_t\in C^1(D_t;
\R^n)$, see \cite[Th.~1, Sec.~2.5]{Perko1998}. (If $f$ is only locally
Lipschitz, then also $\phi_t$ is locally Lipschitz, see
\cite[Theorem~4.34]{Logemann2014}.) The flow has the \textit{group
property}, i.e., $\phi_t(\phi_s(x_0))=\phi_{s+t}(x_0)$ whenever
$s,s+t\in I_{x_0}$. For every $A\subset\R^n$ and for all $t\in\R$,
we denote $\phi^{-1}_t(A)=\{x\in\R^n \ | \ \phi_t(x)\in A\}$. The
system \rfb{eq:ODE} is called \textit{forward complete} if for every
$x_0\in\R^n$, $\sup I_{x_0}=\infty$. This is the case, for instance,
if $\frac{\partial f}{\partial x}$ is bounded.

\vspace{3mm}
\textit{Example~1.} Consider the system 
\begin{equation} \label{eq:simple_sys}
\dot{x} \m=\m x^3-x, \qquad x(0)=x_0\in\R,
\end{equation} 
whose trajectories are given by
\begin{equation}\label{eq:phi_ex1}
   \phi_t(x_0) \m=\m \frac{x_0}{\sqrt{(1-x_0^2)e^{2t}+x_0^2}}.
\end{equation}
All the trajectories starting from $x_0\in(-1,1)$ converge to $\{0\}$,
those starting from $x_0\in\{-1,1\}$ remain there, while those
starting from $|x_0|>1$ have finite escape time. Thus, $[-1,1]\subset
D_t$ for all $t\geq0$. On the other hand, when considering, e.g.,
$x_0=2$, we have that $x_0\in D_t$ only for $t<0.5\log(\tfrac{4}{3})$.

\begin{remark} \label{rmk:back_flow}
From the group property, if $-t\notin I_{x_0}$, then $\phi^{-1}_t
(\{x_0\})=\emptyset$. Therefore, for any set $A\subset\R^n$ and any 
$t\in\R$, we have \vspace{-1mm} 
\begin{equation} \label{eq:back_flow}
   \phi_t^{-1}(A) \m=\m \phi_{-t}(A\cap D_{-t}).
\end{equation}
\end{remark}

We show next two useful properties of the flow that allow us to
define the Perron-Frobenius semigroup in more general settings than,
e.g., \cite{Karabacak2018,Mauroy2013,Rajaram2015,Rajaram2013,
Rajaram2010}. The first one (Lemma~\ref{lmm:sem_flow}) is the group
property enjoyed by the pre-image of the flow, which is shown to hold
also when trajectories have finite escape time. The second one
(Lemma~\ref{lmm:flow_ns}) is the nonsingularity of the flow, which is
shown to hold when $f$ from \rfb{eq:ODE} is continuously
differentiable.

\begin{lemma} \label{lmm:sem_flow}
For any $A\subset\R^n$ and any $t,\tau\in\R$, we have \vspace{-1mm}
\begin{equation} \label{eq:sem_flow}
   \phi^{-1}_{t+\tau}(A) \m=\m \phi_\tau^{-1}(\phi_t^{-1}(A)).
\end{equation}
\end{lemma}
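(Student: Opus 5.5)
The plan is to prove the set equality \rfb{eq:sem_flow} by double inclusion, working pointwise directly from the definitions. By definition, $x\in\phi^{-1}_s(B)$ means that $s\in I_x$ (so that $\phi_s(x)$ is defined) and $\phi_s(x)\in B$. The only tool needed is the group property $\phi_t(\phi_\tau(x))=\phi_{\tau+t}(x)$. We must apply it in a form that keeps track of the maximal intervals, namely the standard fact that for $\tau\in I_x$ we have $I_{\phi_\tau(x)}=I_x-\tau$. This fact follows from uniqueness and maximality of solutions: $s\mapsto\phi_{\tau+s}(x)$ solves \rfb{eq:ODE} with initial value $\phi_\tau(x)$ on $I_x-\tau$. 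Hence this interval is contained in $I_{\phi_\tau(x)}$, and the reverse containment follows symmetrically by flowing back by $-\tau$. We will state this as a preliminary observation.

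\textbf{Inclusion $\supseteq$.} Let $x\in\phi_\tau^{-1}(\phi_t^{-1}(A))$. Then $\tau\in I_x$ and $y=\phi_\tau(x)\in\phi_t^{-1}(A)$, so $t\in I_y=I_x-\tau$ and $\phi_t(y)\in A$. Thus $t+\tau\in I_x$, and by the group property $\phi_{t+\tau}(x)=\phi_t(\phi_\tau(x))\in A$. Hence $x\in\phi^{-1}_{t+\tau}(A)$.

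\textbf{Inclusion $\subseteq$.} This is the step that requires care, and it is where finite escape time matters. Let $x\in\phi^{-1}_{t+\tau}(A)$, so $t+\tau\in I_x$. We must also know that $\tau\in I_x$, since otherwise $\phi_\tau(x)$ is undefined and the right-hand side cannot contain $x$. The plan is to note that if $\tau\notin I_x$, then $\tau$ lies beyond an escape time of $x$ while $t+\tau$ does not. Because $I_x$ is an interval containing both $0$ and $t+\tau$, we have $\tau\in I_x$ whenever $\tau$ lies between $0$ and $t+\tau$. In general, however, this fails. For instance, with $t=-\tau$ and $A\ni x$, the left side equals $A$, while the right side contains only those points of $A$ that lie in $D_\tau$. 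The inclusion therefore holds only under the convention the paper adopts, that $\phi_s^{-1}$ of a point outside the domain is handled via Remark~\ref{rmk:back_flow}.

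We would therefore first check against Example~1 whether the statement must be read with the implicit restriction $x\in D_\tau$, or with $t,\tau$ of the same sign. We would then prove the inclusion in the admissible case: given $\tau\in I_x$, the element $y=\phi_\tau(x)$ satisfies $t=(t+\tau)-\tau\in I_x-\tau=I_y$, and $\phi_t(y)=\phi_{t+\tau}(x)\in A$, so $x\in\phi_\tau^{-1}(\phi_t^{-1}(A))$. The main obstacle is thus exactly this domain bookkeeping for $\tau\notin I_x$. If needed, we will handle it by using \rfb{eq:back_flow} to rewrite both sides as forward images $\phi_{-t-\tau}(A\cap D_{-t-\tau})$ and $\phi_{-\tau}(\phi_{-t}(A\cap D_{-t})\cap D_{-\tau})$, and then comparing these directly.
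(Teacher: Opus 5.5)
Your diagnosis is correct, and sharper than the paper's own proof: for arbitrary $t,\tau\in\R$ the identity \rfb{eq:sem_flow} is false. Your two pointwise inclusions, based on $I_{\phi_\tau(x)}=I_x-\tau$, show that in general $\phi_\tau^{-1}(\phi_t^{-1}(A))=\phi_{t+\tau}^{-1}(A)\cap D_\tau$. Concretely, take Example~1 with $\tau=1$, $t=-1$, $A=(2,3)$. The left side of \rfb{eq:sem_flow} is $(2,3)$, while the right side is empty, because every point of $(2,3)$ escapes before time $\tfrac12\log\tfrac43<1$. So the failure is not even confined to a null set.

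The paper takes a different route. It rewrites both sides with \rfb{eq:back_flow} as forward images, $\phi_{-(t+\tau)}(A\cap D_{-(t+\tau)})$ and $\phi_{-\tau}(\phi_{-t}(A\cap D_{-t})\cap D_{-\tau})$. It then compares the inner sets using the fact that, for $x\in D_t$, $\phi_t(x)\in D_{-t-\tau}$ if and only if $x\in D_{-\tau}$; this is the same domain-shift fact as your $I_{\phi_\tau(x)}=I_x-\tau$. The gap is the second equality of its first display, $\phi_{-(t+\tau)}(A\cap D_{-(t+\tau)})=\phi_{-\tau}(\phi_{-t}(A\cap D_{-(t+\tau)}))$, which tacitly requires $A\cap D_{-(t+\tau)}\subset D_{-t}$. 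That inclusion is guaranteed only when $-t$ lies between $0$ and $-(t+\tau)$, i.e.\ when $t$ and $\tau$ have the same sign. This is precisely your condition that $\tau$ lie between $0$ and $t+\tau$.

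The lemma is only invoked with $t,\tau\geq 0$ (Theorem~\ref{thm:sem}(a)(ii)), so nothing downstream is affected. What your pointwise route buys is visibility: the domain condition appears at the single place it is needed. The image-based route buries it inside a composition of partially defined maps.

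What you should fix is the indecision at the end. Commit to a correct statement: either \rfb{eq:sem_flow} for $t,\tau$ of the same sign (in particular $t,\tau\geq 0$), or the general identity with $\cap D_\tau$ on the left. Your two inclusions then form a complete proof. Also drop the remark about ``the convention the paper adopts'' and the fallback of comparing the forward-image expressions. \rfb{eq:back_flow} is an identity between sets, not a convention, so rewriting both sides through it cannot change the outcome. Indeed, the right-hand side $\phi_{-\tau}(\cdots)$ obtained that way lies inside $\phi_{-\tau}(D_{-\tau})=D_\tau$, which is exactly why the general case fails.
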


{\it Proof.} \m For any set $A\subset\R^n$ and any $t,\tau\in\R$ we have
from \rfb{eq:back_flow} that \vspace{-1mm}
\begin{equation*} \label{eq:1}
   \phi_{t+\tau}^{-1}(A) \m=\m \phi_{-(t+\tau)}(A\cap D_{-(t+\tau)}) 
   \m=\m\phi_{-\tau}(\phi_{-t}(A\cap D_{-(t+\tau)})).
\end{equation*}
Similarly, using again \rfb{eq:back_flow}, we can write \vspace{-1mm}
$$ \phi^{-1}_\tau(\phi_t^{-1}(A))=\phi_{-\tau}(\phi_{-t}(A\cap D_{-t})
   \cap D_{-\tau}).$$
The proof is completed observing that \vspace{-1mm}
$$ \phi_{-t}(A\cap D_{-(t+\tau)}) \m=\m \{x\in\R^n \ | \ \phi_t(x)\in 
   A \cap D_{-(t+\tau)}\}$$
$$ =\m \{x\in\R^n \ | \ \phi_t(x)\in A, \ x\in D_{-\tau}\} \m=\m 
   \phi_{-t}(A\cap D_{-t})\cap D_{-\tau},$$
where we have used that $\phi_t(x)\in D_{-t-\tau}$ implies that 
$x\in D_{-\tau}$. \hfill \qed

\begin{lemma} \label{lmm:flow_ns}
With the above notation, $\phi_t$ is nonsingular for every $t\in\R$,
i.e., if $N\in\Lscr(\R^n)$ such that $\mu(N)=0$, then
$\mu[\phi^{-1}_t(N)]=0$ for every $t\in\R$.
\end{lemma}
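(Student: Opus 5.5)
By \eqref{eq:back_flow} in Remark~\ref{rmk:back_flow}, we have $\phi^{-1}_t(N)=\phi_{-t}(N\cap D_{-t})$. It therefore suffices to show that the $C^1$ map $\phi_{-t}:D_{-t}\to\R^n$ sends Lebesgue null subsets of the open set $D_{-t}$ to null sets. Here $N\cap D_{-t}$ is Lebesgue measurable and $\mu(N\cap D_{-t})\le\mu(N)=0$. Once this is shown, $\phi_t^{-1}(N)$ is contained in a null set, hence is Lebesgue measurable (by completeness of $\Lscr(\R^n)$) with measure zero. So the whole lemma reduces to the classical fact that a locally Lipschitz map from an open subset of $\R^n$ into $\R^n$ maps null sets to null sets.

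To prove this fact, I would first exhaust the open set $D_{-t}$ by countably many compact sets, e.g.\ closed cubes $K_j$ with $\bigcup_j K_j=D_{-t}$. On a slightly larger compact neighborhood of each cube inside $D_{-t}$, the derivative of $\phi_{-t}$ is bounded, because $\phi_{-t}\in C^1(D_{-t};\R^n)$ by \cite[Th.~1, Sec.~2.5]{Perko1998}. I would use a convex neighborhood, such as a slightly enlarged cube, so that the mean value inequality applies; this gives a Lipschitz constant $L_j$ for $\phi_{-t}$ on $K_j$. (If $f$ is only locally Lipschitz, the local Lipschitz property from \cite[Theorem~4.34]{Logemann2014} plays the same role.) Countable subadditivity then reduces the claim to showing that $\mu\bigl(\phi_{-t}(N\cap K_j)\bigr)=0$ for each $j$.

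For fixed $j$ and $\varepsilon>0$, cover $N\cap K_j$ by countably many small cubes $Q_k\subset$ the enlarged neighborhood with $\sum_k\mu(Q_k)<\varepsilon$. This is possible by outer regularity of Lebesgue measure: take an open set of measure less than $\varepsilon$ containing $N\cap K_j$ and write it as a countable union of dyadic cubes, restricting to small enough cubes near $K_j$. A cube of side $s$ has diameter $\sqrt n\,s$, so its image under an $L_j$-Lipschitz map lies in a ball of radius $L_j\sqrt n\,s$. That ball has measure at most $C_nL_j^n\mu(Q_k)$, hence $\mu^*\bigl(\phi_{-t}(N\cap K_j)\bigr)\le C_nL_j^n\varepsilon$. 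Letting $\varepsilon\to0$ gives the result.

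The main obstacle is bookkeeping rather than depth. The cover must stay inside a region where the Lipschitz bound holds, which is handled by intersecting the covering cubes with $K_j$ and using the enlarged convex neighborhood. One must also be careful that $\phi_t^{-1}(N)$ is measurable; this follows because it is a subset of a null set, or alternatively because $\phi_{-t}$ is a diffeomorphism from $D_{-t}$ onto $D_t$.
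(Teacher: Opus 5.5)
Your proposal is correct and follows the paper's route: both use \rfb{eq:back_flow} to write $\phi_t^{-1}(N)=\phi_{-t}(N\cap D_{-t})$ and then use the fact that a $C^1$ map on the open set $D_{-t}$ sends null sets to null sets. The paper simply cites \cite[Lemma~7.25]{Rudin1987} for that fact, whereas you prove it yourself with the local Lipschitz covering argument, and you also make explicit the measurability of $\phi_t^{-1}(N)$ via completeness, which the paper leaves implicit.
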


\textit{Proof.} \m From \rfb{eq:back_flow}, for any $N\in\mathcal{L}
(\R^n)$ and for every $t\in\R$, we have $\phi_t^{-1}(N)=\phi_{-t}(N
\cap D_{-t})$. Since $\phi_{-t}\in C^1(D_{-t};\R^n)$, using 
\cite[Lemma~7.25]{Rudin1987}, if $\mu(N)=0$ then $\mu[\phi^{-1}_t(N)]=
\mu[\phi_{-t}(N\cap D_{-t})]=0$. \hfill \qed

\section{Almost Global Attractivity} \label{sec:aGAS} 

Given a set $\L\subset\rline^n$, we denote $\L^c=\rline^n\setminus\L$.
For any $x\in\rline^n$ and $\L\subset\R^n$, $d(x,\L)=\inf_{z\in\L}
\|x-z\|$ is the distance from $x$ to $\L$. If $\L=\emptyset$, then 
$d(x,\L)=\infty$. For $\L\neq\emptyset$, $d(\cdot,\L):\R^n
\to [0,\infty)$ is continuous.

\begin{definition} \label{def:aGAS}
A closed set $\L\subset\rline^n$ is an \textit{invariant set} for 
the system \rfb{eq:ODE} if, for any $x\in\L$, $\phi_t(x)\in\L$ for 
all $t\geq 0$. An invariant set $\L$ is said to be a \textit{global 
attractor} for \rfb{eq:ODE} if for any $x_0\in \R^n$ we have 
$$ \lim_{t\to\infty} d(\phi_t(x_0),\L) \m=\m 0.$$
\end{definition}

\begin{definition} \label{def:aGAS_R}
An invariant set $\L\subset\rline^n$ is an \textit{almost 
global attractor} for the system \rfb{eq:ODE} if
\begin{equation} \label{eq:attr}
   \mu(\{x_0\in \R^n \ | \ \lim_{t\to\infty}d(\phi_t(x_0),\L)\neq0\})
   \m=\m 0 \m.
\end{equation}
\end{definition}

It follows from the above definitions that if, for instance, the
system \rfb{eq:ODE} is globally asymptotically stable with
equilibrium at $\{0\}$, a possible choice for a global attractor may
be $\L=\{0\}$ or any invariant set containing $0$. The same can be
said for almost global attractors, see Examples~2, 3 below.

\begin{remark}
The set $\L=\emptyset$ is an invariant set for the system
\rfb{eq:ODE}, but it cannot be a global attractor nor an almost global
attractor. If \rfb{eq:ODE} is forward complete, then the set
$\L=\R^n$ is a (trivial) global attractor.
\end{remark}

\begin{definition}
An invariant set $\L\subset\R^n$ is \textit{Lyapunov stable} for the
system \rfb{eq:ODE} if for any $\e>0$ there exists a $\delta>0$ such
that if $d(x_0,\L)<\delta$ then $d(\phi_t(x_0),\L)<\e$ for all 
$t\geq 0$.
\end{definition}

We point out that if $\L\subset\R^n$ is an almost global attractor for
the system \rfb{eq:ODE}, it does not follow that $\L$ is Lyapunov
stable. We present below two examples illustrating this fact: one
example obtained with a system similar to \rfb{eq:simple_sys}, and an
example built using the well-known Artstein's circles
\cite[Sect.~6]{Artstein1983}. (We mention that the latter example is
also studied in \cite[Example~2]{Monzon2006}.)

\vspace{3mm}
\textit{Example~2.} Consider the system 
\begin{equation}\label{eq:simple_sys2}
\dot{x} \m=\m -x^3+x, \qquad x(0)\in\R.
\end{equation} 
The following sets are invariant for \rfb{eq:simple_sys2}:
$\L_1=\{-1,1\}$, which is almost globally attractive and Lyapunov
stable; and $\L_2=\{-1,0,1\}$, which is globally attractive, but
not Lyapunov stable (since the equilibrium $\{0\}$ is unstable).

\begin{figure} 
\centering
\includegraphics[width=0.5\linewidth]{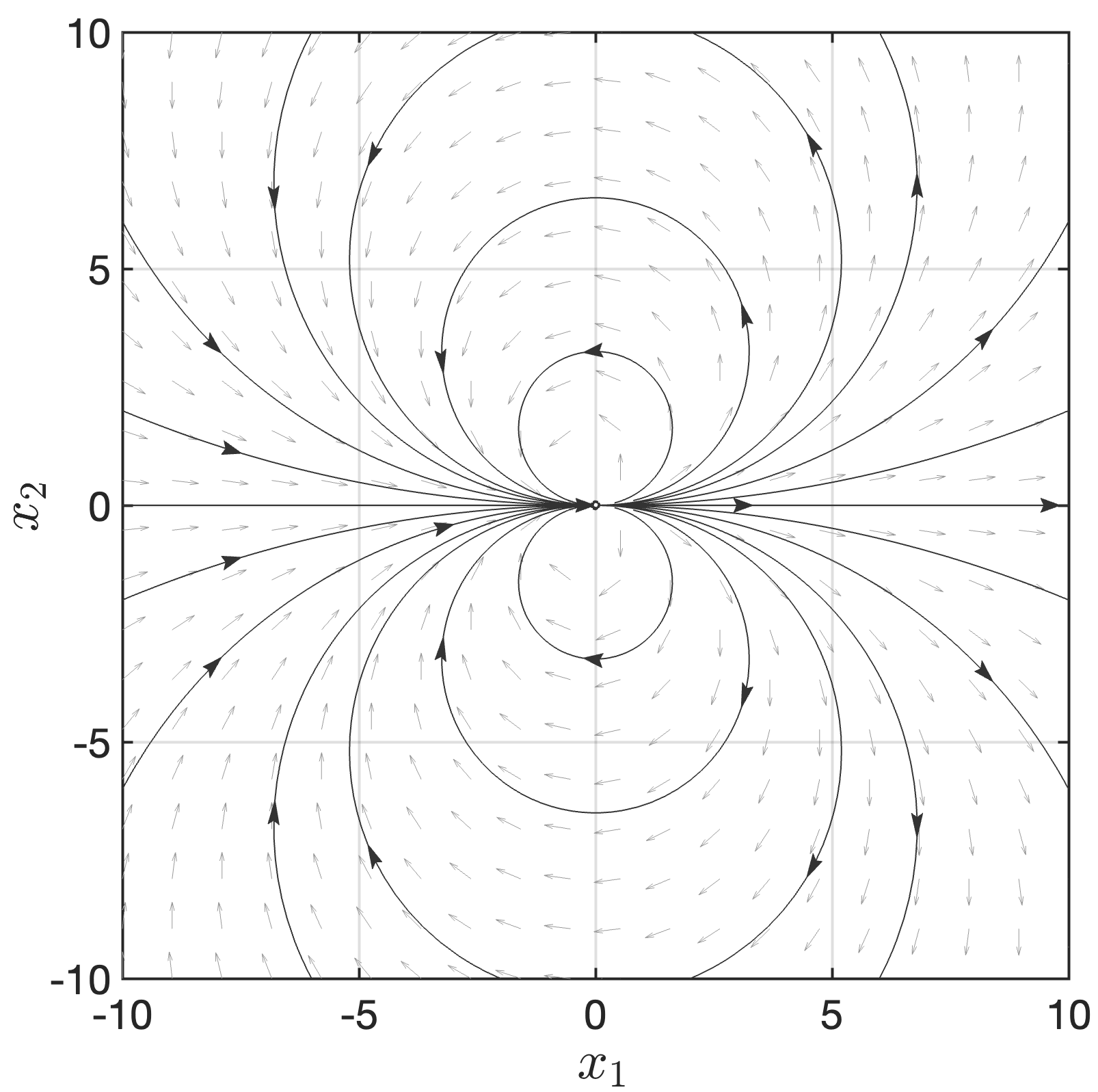}
\caption{The trajectories of the system \rfb{eq:counter}.}
\label{fig:counter}
\end{figure}

\vspace{3mm}
\textit{Example 3.} Consider the system
\begin{equation} \label{eq:counter}
   \begin{cases} \m\dot{x}_1\m=\m x_1^2-x_2^2, \\
   \m\dot{x}_2\m=\m 2x_1x_2, \end{cases}
   \qquad (x_1(0),x_2(0))\in\rline^2.
\end{equation}
When $x_2(0)\neq0$, the trajectories of \rfb{eq:counter} move along
circles, as shown in Figure~\ref{fig:counter}, while when $x_2(0)=0$,
$x_2(t)=0$ for all $t\geq 0$. The set $\L=\{0\}$ is an almost global
attractor for \rfb{eq:counter}, since all the trajectories converge
to $\L$, except those starting from the set $\Omega=\{(x_1(0),x_2(0))
\in\rline^2 \ | \ x_1(0)>0, \ x_2(0)=0\}$. Clearly, $\mu(\Omega)=0$.
The set $\L=\{0\}$ is not Lyapunov stable, since, as shown in
Figure~\ref{fig:counter}, any trajectory starting from $\Omega$
escapes to infinity. Similarly, any bounded invariant set containing
$\{0\}$ is not Lyapunov stable.

\begin{remark}
In \cite{Rajaram2015,Rajaram2013,Rajaram2010} an almost global
attractor is assumed to be Lyapunov stable, while our framework here
is more general.
\end{remark} 

\section{The Perron-Frobenius Semigroup} \label{sec:3} 

Intuitively, given a dynamical system on $\R^n$ and an invariant set
$\L\subset\R^n$, we can think of the corresponding Perron-Frobenius
semigroup as describing the evolution of the density of a mass of
water spread over $\L^c$, flowing along the trajectories of the
system, such that each water molecule moves according to the
differential equation of the system. Thus, if a trajectory escapes to
infinity or enters $\L$, the corresponding water molecule disappears
from $\L^c$, so that the total amount of water is a nonincreasing 
function of time. We give here a general definition of the
Perron-Frobenius semigroup and we show that it is a positive strongly
continuous semigroup of contractions. 

For $g\in C^1
(\rline^n)$ we denote $\nabla\cdot g \m=\m \frac{\partial
g_1}{\partial x_1}+ \frac{\partial g_2}{\partial x_2}+\dots + \frac{
\partial g_n}{\partial x_n}$. If $\L\subset\R^n$ is closed and $\rho
\in L^1(\L^c)$, then, by definition, $\rho(x)=0$ for all $x\in\L$, so
that $\rho\in L^1(\R^n)$. For any $\rho\in L^1(\L^c)$, we denote 
$\rho^+=\max(0,\rho(x))$, $\rho^-=\max(0,-\rho(x))$. Clearly, $\rho=
\rho^+-\rho^-$ and $|\rho|=\rho^++\rho^-$. We denote by $C_0^\infty
(\L^c)$ the test functions on $\L^c$, i.e., smooth functions with 
compact support on $\L^c$, and by $\Lscr(\L^c)$ the Lebesgue 
$\sigma$-algebra on $\L^c$. When integrating using the Lebesgue 
measure $\mu$, we simply write $\dd x$ in place of $\mu(\dd x)$.

\vspace{1mm}
The following standing assumption is kept throughout this section.

\vspace{2mm}
\noindent\textbf{Standing Assumption.} $\L\subset\rline^n$ is an 
invariant set for the system \rfb{eq:ODE}.

\begin{definition} \label{def:F-P} 
For any $t\geq 0$, we denote by $\tline_t: L^1(\L^c)\to L^1(\L^c)$ the
Perron-Frobenius operator corresponding to the flow of the system
\rfb{eq:ODE} on the open set $\L^c$, defined as 
follows:

For any $\rho\in L^1(\L^c)$, $\rho\geq 0$, and any $A\in\mathcal{L}
(\L^c)$, we define
\begin{equation} \label{eq:PF_def}
   \int_A[\tline_t\rho](x)\dd x \m=\m \int_{\phi_t^{-1}(A)}
   \rho(x)\dd x,
\end{equation}
so that $\tline_t\rho$ is defined as the Radon-Nikodym derivative of
the measure on the right-hand side of \rfb{eq:PF_def}. For any
$\rho\in L^1(\L^c)$, we decompose $\rho=\rho^+ -\rho^-$ (positive
and negative parts of $\rho$) and we define
\begin{equation} \label{eq:PF_pos_neg}
   \tline_t\rho \m=\m \tline_t\rho^+-\tline_t\rho^-.
\end{equation}
\end{definition}

\begin{remark}
As proved in Lemma~\ref{lmm:flow_ns}, the flow $\phi_t$ of the system
\rfb{eq:ODE} is nonsingular. Thus, it follows from Theorem
\ref{thm:Radon} that for $\rho\geq0$, $\tline_t\rho$ is well-defined
by \rfb{eq:PF_def}. In \cite{Rajaram2015, Rajaram2013,Rajaram2010} the
flow \textit{is assumed} to be nonsingular, although the vector field
$f$ from \rfb{eq:ODE} is assumed to be infinitely differentiable.
\end{remark}

\begin{remark} 
For some $x\in\rline^n$, the preimage $\phi_t^{-1}(\{x\})$ may be
empty. This is the case if the trajectory starting from $x$ escapes
backward in finite time, before reaching $-t$ (see Remark
\ref{rmk:back_flow}). However, this is not an issue for the definition
of $\tline_t$, since \rfb{eq:PF_def} is still well-defined.
\end{remark}

\begin{lemma}\label{lmm:PF_expl}
For every $\rho\in
L^1(\L^c)$ and for every $t\geq0$, we have that
\begin{equation} \label{eq:PF_D_t}
   \int_A[\tline_t\rho](x)\dd x \m=\m \int_{A\cap D_{-t}}[\tline_t
   \rho](x)\dd x \FORALL A\in\mathcal{L}(\L^c).
\end{equation}
We can compute $\tline_t\rho$ explicitly by 
\begin{equation} \label{eq:PF_expl}
   [\tline_t\rho](x) \m=\m \rho(\phi_{-t}(x))\det\frac{\partial
   \phi_{-t}(x)}{\partial x} \FORALL x\in A\cap D_{-t}, \ 
   A\in\mathcal{L}(\L^c).
\end{equation}
Moreover, we have 
\begin{equation} \label{eq:det_pos}
   \det\frac{\partial \phi_{-t}(x)}{\partial x} \geq 0 \FORALL x\in 
   A\cap D_{-t}, \ A\in\mathcal{L}(\L^c).
\end{equation}
\end{lemma}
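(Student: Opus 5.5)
We reduce to $\rho\geq 0$: by \rfb{eq:PF_pos_neg} and linearity of the right-hand sides of \rfb{eq:PF_D_t} and \rfb{eq:PF_expl} in $\rho$, the general case follows once the nonnegative case is settled. So fix $\rho\in L^1(\L^c)$, $\rho\geq0$, and $t\geq0$.

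\emph{Step 1: \rfb{eq:PF_D_t}.} For $A\in\Lscr(\L^c)$ write $A=(A\cap D_{-t})\cup(A\setminus D_{-t})$. By \rfb{eq:back_flow}, $\phi_t^{-1}(A\setminus D_{-t})=\phi_{-t}((A\setminus D_{-t})\cap D_{-t})=\emptyset$. Hence, by \rfb{eq:PF_def},
$$\int_{A\setminus D_{-t}}[\tline_t\rho](x)\dd x = 0,$$
and additivity of the integral gives \rfb{eq:PF_D_t}. In particular $\tline_t\rho=0$ a.e.\ outside $D_{-t}$.

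\emph{Step 2: \rfb{eq:det_pos}.} We use that $\phi_{-t}$ is a $C^1$ diffeomorphism from the open set $D_{-t}$ onto the open set $D_t$, with inverse $\phi_t$; this follows from the group property. Consider $s\mapsto \det\frac{\partial\phi_{-s}(x)}{\partial x}$ for $s\in[0,t]$ and $x\in D_{-t}$. Note that $x\in D_{-t}$ implies $x\in D_{-s}$, since $I_x$ is an interval containing $0$. This function is continuous in $s$, equals $1$ at $s=0$, and never vanishes, because each $\phi_{-s}$ is a diffeomorphism on $D_{-s}$. Therefore it stays positive. Alternatively, Liouville's formula gives
$$\det\frac{\partial\phi_{-t}(x)}{\partial x}=\exp\Big(-\int_0^t(\nabla\cdot f)(\phi_{-s}(x))\,\dd s\Big)>0.$$
We will use Liouville's formula, as it is cleaner; it follows from the variational equation $\frac{\dd}{\dd s}\frac{\partial\phi_{s}}{\partial x}=\frac{\partial f}{\partial x}(\phi_s)\frac{\partial\phi_s}{\partial x}$ together with Jacobi's formula.

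\emph{Step 3: \rfb{eq:PF_expl}.} Let $B=A\cap D_{-t}$. By \rfb{eq:back_flow}, $\phi_t^{-1}(A)=\phi_{-t}(B)$. Apply the change of variables $y=\phi_{-t}(x)$ for the $C^1$ diffeomorphism $\phi_{-t}:D_{-t}\to D_t$ (Rudin, Theorem~7.26), using Step 2 to drop the absolute value on the Jacobian:
$$\int_{\phi_{-t}(B)}\rho(y)\dd y=\int_B\rho(\phi_{-t}(x))\det\frac{\partial\phi_{-t}(x)}{\partial x}\dd x.$$
Combining this with \rfb{eq:PF_def} and Step 1, the function $\rho(\phi_{-t}(\cdot))\det\frac{\partial\phi_{-t}}{\partial x}$, extended by $0$ off $D_{-t}$, has the same integrals as $\tline_t\rho$ over every $A\in\Lscr(\L^c)$. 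By the uniqueness part of Theorem~\ref{thm:Radon}, the two functions agree a.e. This is the meaning of \rfb{eq:PF_expl}. Measurability of $\rho\circ\phi_{-t}$ follows from Lemma~\ref{lmm:flow_ns}.

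\emph{Main obstacle.} The delicate point is verifying that $\phi_{-t}$ is a diffeomorphism between the open sets $D_{-t}$ and $D_t$, which is what legitimizes the change of variables. For this we show $\phi_{-t}(D_{-t})=D_t$ and $\phi_t\circ\phi_{-t}=\mathrm{id}$ on $D_{-t}$, both directly from the group property and the definition of $I_{x_0}$.
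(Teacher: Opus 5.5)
Your proof is correct. Step 1 is the paper's argument for \rfb{eq:PF_D_t}: the paper writes $\phi_t^{-1}(A)=\phi_t^{-1}(A\cap D_{-t})$, while you write $\phi_t^{-1}(A\setminus D_{-t})=\emptyset$. Step 3 is the change of variables that the paper obtains by citing Lasota--Mackey, Corollary 3.2.1. You also make explicit that $\phi_{-t}:D_{-t}\to D_t$ is a $C^1$ diffeomorphism with inverse $\phi_t$, which the paper only alludes to ("the backward flow is well-defined on $A\cap D_{-t}$").

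The genuine difference is how you get \rfb{eq:det_pos}, and the logical order. The paper first establishes \rfb{eq:PF_expl}. It then deduces $\det\frac{\partial\phi_{-t}(x)}{\partial x}\geq 0$ from the positivity of $\tline_t$ (Radon--Nikodym derivatives are nonnegative), by choosing $\rho$ with $\rho(\phi_{-t}(x))>0$. You instead obtain the sign from ODE theory alone, via Liouville's formula or continuity in $s\in[0,t]$ plus non-vanishing of a diffeomorphism's Jacobian. You then feed it into the change of variables to remove the absolute value that the change-of-variables theorem naturally produces.

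Your route costs a little more: you need the variational equation and joint $C^1$ regularity of the flow. In return it gives strict positivity at every $x\in D_{-t}$. It also sidesteps a subtle point in the paper's argument. If the cited formula carries $|\det|$, then positivity of $\tline_t\rho$ says nothing about the sign. And choosing $\rho$ positive at the single point $\phi_{-t}(x)$ only yields an almost-everywhere conclusion unless continuity of the determinant is added. The paper's route is shorter and stays within the operator-theoretic definitions, but it relies on the cited formula already being sign-free.
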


\textit{Proof.} Without loss of generality, we assume $\rho\geq 0$.
(If this is not the case, we can write $\rho=\rho^+-\rho^-$ and use
\rfb{eq:PF_pos_neg}.) Using \rfb{eq:back_flow} and \rfb{eq:PF_def},
we have
$$ \int_A[\tline_t\rho](x)\dd x = \int_{\phi_{-t}(A\cap D_{-t})}
   \hspace{-1mm}\rho(x)\dd x=\int_{\phi_t^{-1}(A\cap D_{-t})}
   \hspace{-1mm}\rho(x)\dd x=\int_{A\cap D_{-t}}\hspace{-1mm}
   [\tline_t\rho](x)\dd x.$$
This proves \rfb{eq:PF_D_t}. To prove \rfb{eq:PF_expl} we use
\cite[Corollary~3.2.1]{Lasota1998} on the right-hand side of
\rfb{eq:PF_D_t}. (The backward flow $\phi_{-t}$ is well-defined on
$A\cap D_{-t}$.) Finally, it is clear from Definition~\ref{def:F-P}
that if $\rho\geq 0$ then also $\tline_t\rho\geq0$ (remember that the
Radon-Nykodym derivative is always $\geq 0$). For every $x\in\L^c$ we
can choose $\rho\in L^1(\L^c)$ such that $\rho(\phi_{-t}(x))>0$.
Hence, \rfb{eq:PF_expl} implies \rfb{eq:det_pos}. \hfill \qed

\vspace{3mm} The intuition behind equation \rfb{eq:PF_D_t} is the
following: The density $\tline_t\rho$ vanishes at points $x$ whose
trajectories escape (backward) in time $\tau\in[-t,0]$. Therefore, the
integral of $\tline_t\rho$ over the set $A\setminus D_{-t}$ is zero.

\begin{lemma}\label{lmm:abs}
For any $\rho\in
L^1(\L^c)$, we have that
\begin{equation} \label{eq:abs}
   \tline_t|\rho| \m=\m |\tline_t\rho| \FORALL t\geq 0.
\end{equation} 
\end{lemma}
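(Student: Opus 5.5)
By \rfb{eq:PF_pos_neg}, $\tline_t\rho=\tline_t\rho^+-\tline_t\rho^-$, where both terms are nonnegative. By additivity of the right-hand side of \rfb{eq:PF_def} in $\rho$ and uniqueness of the Radon-Nikodym derivative (Theorem~\ref{thm:Radon}), we also have $\tline_t|\rho|=\tline_t\rho^++\tline_t\rho^-$. So \rfb{eq:abs} is equivalent to showing that $\tline_t\rho^+$ and $\tline_t\rho^-$ have essentially disjoint supports. Indeed, if $\tline_t\rho^+\cdot\tline_t\rho^-=0$ a.e., then $|\tline_t\rho^+-\tline_t\rho^-|=\tline_t\rho^++\tline_t\rho^-$ a.e., which is exactly the claim.

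The plan for the disjointness is to show that $\tline_t$ maps functions with disjoint supports to functions with disjoint supports. Let $P=\{x\,|\,\rho(x)>0\}$ and $M=\{x\,|\,\rho(x)<0\}$, which are disjoint measurable sets. The key step is to use that $\phi_t$ is injective where it is defined (uniqueness of solutions together with the group property). This makes the forward images $\phi_t(P\cap D_t)$ and $\phi_t(M\cap D_t)$ disjoint, and they are Lebesgue measurable because $\phi_t$ is $C^1$ and maps null sets to null sets (as in Lemma~\ref{lmm:flow_ns}). Take $A=\L^c\setminus\phi_t(P\cap D_t)$ in \rfb{eq:PF_def}. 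Then $\phi_t^{-1}(A)\cap P=\emptyset$, hence $\int_A\tline_t\rho^+\,\dd x=0$, so $\tline_t\rho^+=0$ a.e.\ outside $\phi_t(P\cap D_t)$. Similarly, $\tline_t\rho^-=0$ a.e.\ outside $\phi_t(M\cap D_t)$. Disjointness of the two images then gives the a.e.\ disjoint supports.

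Alternatively, and more directly, one can use the explicit formula \rfb{eq:PF_expl} from Lemma~\ref{lmm:PF_expl}. On $D_{-t}$,
$$[\tline_t\rho^\pm](x)=\rho^\pm(\phi_{-t}(x))\det\frac{\partial\phi_{-t}(x)}{\partial x},$$
and by \rfb{eq:det_pos} the determinant is $\geq0$. Since $\rho^+(y)\rho^-(y)=0$ for every $y$, the product $\tline_t\rho^+\cdot\tline_t\rho^-$ vanishes on $D_{-t}$. Off $D_{-t}$, \rfb{eq:PF_D_t} shows that both functions integrate to zero over every measurable subset, so both vanish a.e. Hence $|\tline_t\rho|=\tline_t\rho^++\tline_t\rho^-=\tline_t|\rho|$ a.e.

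The main obstacle is measure-theoretic care. Lemma~\ref{lmm:PF_expl} is stated for general $\rho$ via \rfb{eq:PF_pos_neg}, and the explicit formula holds only a.e., since $\rho^\pm$ are equivalence classes. One must therefore check that composition with $\phi_{-t}$ respects a.e.\ equality. This follows from the nonsingularity in Lemma~\ref{lmm:flow_ns}: if $\rho^\pm$ is modified on a null set $N$, the composition changes only on $\phi_t(N\cap D_t)=\phi_{-t}^{-1}(N)$, which is null. With this settled, the pointwise argument above goes through, and I would present that version.
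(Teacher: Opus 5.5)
Your proof is correct. The version you say you would present rests on the same ingredient as the paper, the explicit formula \rfb{eq:PF_expl}, but it is organized differently. The paper proves $(\tline_t\rho)^+=\tline_t\rho^+$ (and similarly for the negative part) by integrating over the set $\tilde A$ where $\tline_t\rho\geq 0$. Using \rfb{eq:PF_expl} and \rfb{eq:det_pos}, it identifies $\tilde A$ with the set where $\rho(\phi_{-t}(x))\geq 0$, and then adds the two parts. You instead reduce \rfb{eq:abs} to $\tline_t\rho^+\cdot\tline_t\rho^-=0$ a.e., i.e., to $\tline_t$ preserving disjointness of supports. In your formulation the sign of the Jacobian plays no role, since the product vanishes because $\rho^+\rho^-=0$ pointwise. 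So \rfb{eq:det_pos} is not actually needed, and nonnegativity of $\tline_t\rho^\pm$ comes directly from the Radon--Nikodym construction. Your explicit handling of null sets (via Lemma~\ref{lmm:flow_ns}) and of the region outside $D_{-t}$ (via \rfb{eq:PF_D_t}) is also tidier than the paper's identification of $\tilde A$, which holds only up to such exceptional sets. Your first, injectivity-based argument is a genuinely different and more elementary route. It uses only the defining identity \rfb{eq:PF_def}, injectivity of $\phi_t$ on $D_t$, and measurability of $\phi_t(P\cap D_t)$. That measurability is most cleanly justified by writing the set as $\phi_{-t}^{-1}(P)$ via \rfb{eq:back_flow}: a preimage under a continuous map on an open set, combined with Lemma~\ref{lmm:flow_ns}. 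This route bypasses the change-of-variables formula and the Jacobian entirely, so it would survive weakening $f\in C^1$ to $f$ locally Lipschitz. The paper's route, by contrast, yields the slightly more explicit identities $(\tline_t\rho)^\pm=\tline_t\rho^\pm$, which are equivalent to your disjointness statement given additivity.
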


\textit{Proof.} We start by proving that $(\tline_t\rho)^+=\tline_t
\rho^+$. For any $A\in\Lscr(\L^c)$,
$$ \int_A[\tline_t\rho]^+(x)\dd x \m=\m \int_A \max\{0,[\tline_t\rho]
   (x)\}\dd x \m=\m \int_{\tilde{A}}[\tline_t\rho](x)\dd x \m=\m
   \int_{\phi_t^{-1}(\tilde{A})}\rho(z)\dd z,$$
where $\tilde{A}=\{x\in A \ | \ [\tline_t\rho](x)\geq 0\}$. It 
follows from \rfb{eq:PF_expl} and \rfb{eq:det_pos} that 
$\tilde{A}=\{x\in A \ | \ \rho(\phi_t^{-1}(x))\geq0\}$, so that 
$$ \int_A[\tline_t\rho]^+(x)\dd x \m=\m \int_{\phi_t^{-1}(\tilde A)}
   \rho(z)\dd z\m=\m\int_{B}\rho(z)\dd z,$$
where $B=\{z\in\phi_t^{-1}(A) \ | \ \rho(z)>0\}=\{z\in\R^n \ | \ 
\phi_t(z)\in A, \ \rho(z)\geq0\}$. Therefore, 
$$ \int_A[\tline_t\rho]^+(x)\dd x\m=\m\int_{B}\rho^+(z)\dd z \m=\m
   \int_{\phi_t^{-1}(A)}\rho^+(z)\dd z \m=\m \int_A[\tline_t\rho^+]
   (x)\dd x.$$
Thus, $(\tline_t\rho)^+=\tline_t\rho^+$. Similarly, we can prove that
$(\tline_t\rho)^-=\tline_t\rho^-$. Hence, 
$|\tline_t\rho|=\tline_t|\rho|$. \hfill \qed

\vspace{3mm} We are now ready to prove the main result of this
section: the Perron-Frobenius operator defines a positive strongly
continuous semigroup of contractions. For more details on these
concepts, we refer to \cite{Engel2000}.

\begin{theorem}\label{thm:sem}
The family of Perron-Frobenius operators $\{\tline_t\}_{t\geq 0}$, as
defined in Definition~\ref{def:F-P}, has the following properties:
\begin{itemize}
\item[\rm (a)] $\{\tline_t\}_{t\geq0}$ is a \textit{strongly
continuous semigroup}, i.e.,
\begin{itemize}
\item[\rm (i)] $\tline_0=\mathrm{I}$, where $\mathrm{I}$ denotes the 
identity on $L^1(\L^c)$.
\item[\rm (ii)] $\tline_{t+\tau}=\tline_t\tline_\tau$ for every $t,
\tau\geq0$.
\item[\rm (iii)] $\lim_{t\to0, \m t\geq 0}\tline_t\rho=\rho$ for all 
$\rho\in L^1(\L^c)$.
\end{itemize}
\item[\rm (b)] $\{\tline_t\}_{t\geq0}$ is a \textit{semigroup of
contractions}, i.e., for all $t\geq 0$,
\begin{equation} \label{eq:PF_contraction}
   \|\tline_t\rho\|_{L^1(\L^c)} \leq \|\rho\|_{L^1(\L^c)} \FORALL 
   \rho\in L^1(\L^c).
\end{equation}
\item[\rm (c)] $\{\tline_t\}_{t\geq0}$ is a \textit{positive 
semigroup}, i.e., for all $t\geq 0$, 
\begin{equation} \label{eq:PF_positive}
   \tline_t\rho\geq 0 \FORALL \rho\in L^1(\L^c), \ \rho\geq 0.
\end{equation}
\end{itemize}
\end{theorem}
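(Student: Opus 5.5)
I will dispose of the algebraic properties first, since they follow almost directly from the definition and the earlier lemmas, and then spend most of the effort on strong continuity.

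\emph{Positivity and the identity.} Property (c) is immediate from Definition~\ref{def:F-P}: for $\rho\geq0$, $\tline_t\rho$ is a Radon--Nikodym derivative of a nonnegative measure, hence nonnegative (Theorem~\ref{thm:Radon}, with Lemma~\ref{lmm:flow_ns} ensuring absolute continuity). For (a)(i), $\phi_0^{-1}(A)=A$, so $\int_A\tline_0\rho\,\dd x=\int_A\rho\,\dd x$ for all $A\in\Lscr(\L^c)$ when $\rho\geq0$. Uniqueness in Theorem~\ref{thm:Radon} then gives $\tline_0\rho=\rho$, and the general case follows from \rfb{eq:PF_pos_neg}.

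\emph{Semigroup law.} For (a)(ii), take $\rho\geq0$ and $A\in\Lscr(\L^c)$. Applying \rfb{eq:PF_def} twice, to $\tline_\tau\rho\geq0$ and then to $\rho$, gives
$$\int_A\tline_t\tline_\tau\rho\,\dd x=\int_{\phi_t^{-1}(A)}\tline_\tau\rho\,\dd x=\int_{\phi_\tau^{-1}(\phi_t^{-1}(A))}\rho\,\dd x.$$
By Lemma~\ref{lmm:sem_flow} the last domain is $\phi_{t+\tau}^{-1}(A)$. One point needs checking: $\phi_t^{-1}(A)$ must lie in $\Lscr(\L^c)$ up to the part inside $\L$. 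Since $\L$ is invariant, points of $\L$ cannot leave $\L$, so $\phi_t^{-1}(A)\subset\L^c$ when $A\subset\L^c$; measurability follows from \rfb{eq:back_flow}, because $\phi_{-t}$ is $C^1$ on $D_{-t}$. Uniqueness of the Radon--Nikodym derivative then gives the identity for $\rho\geq0$, and linearity via \rfb{eq:PF_pos_neg} extends it to all $\rho$. I will also note that $\tline_t$ is linear, which follows from \rfb{eq:PF_def} being additive in $\rho\ge0$.

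\emph{Contraction.} For (b), Lemma~\ref{lmm:abs} gives
$$\|\tline_t\rho\|_1=\int_{\L^c}\tline_t|\rho|\,\dd x=\int_{\phi_t^{-1}(\L^c)}|\rho|\,\dd x\leq\|\rho\|_1.$$

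\emph{Strong continuity.} Part (a)(iii) is the main obstacle. Since the operators are uniformly bounded by (b), it suffices to prove $\tline_t\rho\to\rho$ on a dense subset, namely $\rho\in C_0^\infty(\L^c)$. Fix such $\rho$ with support $K\subset\L^c$ compact. The set $\L^c$ is open, and the flow is continuous on its open domain. Hence there exist $t_0>0$ and a compact $K'\subset\L^c$ such that for $t\in[0,t_0]$ the points $\phi_{-t}(x)$ are defined for $x$ in a neighbourhood of $K'$, and $\tline_t\rho$ is supported in $K'=\bigcup_{t\le t_0}\phi_t(K)$. To see the support claim, note that if $\phi_{-t}(x)\in K$ then $x=\phi_t(z)$ with $z\in K$, and forward trajectories from the compact set $K$ exist on $[0,t_0]$ and stay in $\L^c$ for small $t$.

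On $K'$, formula \rfb{eq:PF_expl} gives
$$\tline_t\rho(x)=\rho(\phi_{-t}(x))\det\frac{\partial\phi_{-t}(x)}{\partial x}.$$
Because $f\in C^1$, the map $(t,x)\mapsto\phi_{-t}(x)$ and its Jacobian are jointly continuous. Consequently $\tline_t\rho\to\rho$ uniformly on $K'$ as $t\to0$, and integrating over the compact $K'$ gives $L^1$ convergence.

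The delicate points are the joint continuity of the Jacobian and verifying that $K'$ stays inside $\L^c$. For the latter, $\phi_t(z)\in\L$ with $t\ge0$ is allowed in principle, but $d(\cdot,\L)$ is bounded below on $K$ and the flow is continuous, so this cannot happen for small $t$. Finally, a standard $\varepsilon/3$ argument using $\|\tline_t\|\le1$ extends the convergence from the dense set to all of $L^1(\L^c)$.
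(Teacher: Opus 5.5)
Your proposal is correct and follows essentially the same route as the paper. Parts (a)(i), (a)(ii) and (b) come from the defining identity together with Lemma~\ref{lmm:sem_flow} and Lemma~\ref{lmm:abs}, and (a)(iii) comes from uniform convergence of $\rho(\phi_{-t}(x))\det\frac{\partial\phi_{-t}(x)}{\partial x}$ to $\rho(x)$ on the compact set swept out by the forward flow of $\mathrm{supp}\,\rho$ over a short time interval, followed by density and the contraction bound. The deviations are minor: you obtain positivity (c) directly from the nonnegativity of the Radon--Nikodym derivative rather than from Lemma~\ref{lmm:abs}, and you are more careful than the paper about the measurability of $\phi_t^{-1}(A)$, the linearity of $\tline_t$, and the backward flow being defined on the whole compact set for small $t$.
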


We mention that Lemma~\ref{lmm:abs} is a much stronger statement than
\rfb{eq:PF_positive}.

\vspace{2mm} \textit{Proof.} (a) Without loss of generality, we assume
that $\rho\geq0$. (If this is not the case, we can simply write
$\rho=\rho^+-\rho^-$ and then use \rfb{eq:PF_pos_neg}.)

(a)(i) As discussed at the beginning of Subsection~\ref{sec:prob},
for all $x_0\in\R^n$ we have $0\in I_{x_0}$ and
$x(0)=\phi_0(x_0)=x_0$.  Clearly, for any $A\subset\R^n$,
$\phi_0^{-1}(A)=A$.  Therefore, for every non-negative $\rho\in
L^1(\L^c)$ and any $A\in\Lscr(\L^c)$, we have
$$ \int_A[\tline_0\rho](x)\dd x \m=\m \int_{\phi_0^{-1}(A)}\rho(x)\dd x
   \m=\m \int_A\rho(x)\dd x.$$
From the above, it follows that $\tline_0\rho=\rho$.

(a)(ii) Let $t,\tau\geq0$. Using \rfb{eq:sem_flow}, for any $A\in
\Lscr(\L^c)$ we can write
$$ \int_A[\tline_{t+\tau}\rho](x)\dd x \m=\m \int_{\phi^{-1}_{t+\tau}
   (A)}\rho(x)\dd x=\int_{\phi_\tau^{-1}(\phi_t^{-1}(A))}\rho(x)\dd x$$
$$ =\m \int_{\phi^{-1}_t(A)}[\tline_\tau\rho](x)\dd x=\int_{A}
   [\tline_t\tline_\tau\rho](x)\dd x.$$
Therefore, we have that  $\tline_{t+\tau}=\tline_t\tline_\tau$ for 
every $t,\tau\geq 0$. 

Since we need (b) to prove (a)(iii), first we prove (b).  

(b) From \rfb{eq:PF_def} and \rfb{eq:abs}, for every $\rho\in L^1
(\L^c)$ we have the following:
\begin{equation} \label{eq:contr_1}
   \|\tline_t\rho\|_{L^1(\L^c)} \m= \int_{\L^c}|[\tline_t\rho](x)|\m
   \dd x = \int_{\L^c}[\tline_t|\rho|](x)\m\dd x = \int_{\phi^{-1}_t
   (\L^c)}|\rho(x)|\dd x.
\end{equation}
By assumption, the set $\L\subset\R^n$ is invariant, therefore, for 
every $t\geq 0$, $\phi^{-1}_t(\L^c)\subset\L^c$. Using the above 
inclusion in \rfb{eq:contr_1}, for every $t\geq0$ we get that 
$$ \|\tline_t\rho\|_{L^1(\L^c)} \m\leq\m \int_{\L^c}|\rho(x)|\dd x
   \m=\m\|\rho\|_{L^1(\L^c)},$$
for all $\rho\in L^1(\L^c)$. Thus, $\{\tline_t\}_{t\geq 0}$ is a 
semigroup of contractions.

(a)(iii) Let $\psi\in C_0^\infty(\L^c)$, i.e., $\psi$ is a test
function on $\L^c$. Recall the notation $D_t$ after \rfb{eq:ODE}. The
open sets $D_1\subset D_{\half}\subset D_{\frac{1}{3}}\subset
D_{\frac{1}{4}}\subset\dots$ are a covering of $\L^c$, because from
each point $x\in\L^c$ the flow $\phi_t(x)$ is defined for some $t>0$.
Hence, the sequence $\{D_\frac{1}{k}\}_{k\in\nline}$ is an open
covering of the support of $\psi$, $\mathrm{supp}\m\psi$. Since
$\mathrm{supp}\m\psi$ is compact, there exists $k_0\in\nline$ such
that $\mathrm{supp}\m\psi\subset D_{\frac{1}{k_0}}$. The function
$(t,x)\to\phi_t(x)$ is continuous on the compact set $[0,k_0^{-1}]
\times\mathrm{supp}\m\psi$, hence the image of this set,
$$ K \m=\m \{\phi_t(x) \ | \ t\in\left[0,k_0^{-1}\right], \ x\in
   \mathrm{supp}\m\psi\}$$
is compact. For any $t\in[0,k_0^{-1}]$, using \rfb{eq:PF_D_t} and 
\rfb{eq:PF_expl}, we have
$$ \|\tline_t\psi-\psi\|_{L^1(\L^c)} \m=\m \int_{\L^c}|[\tline_t\psi]
   (x)-\psi(x)|\m\dd x \m=\m \int_{\L^c\cap D_{-t}}|[\tline_t\psi](x)
   -\psi(x)|\m\dd x$$
\begin{equation} \label{eq:str_cont_1}
   =\m \int_{\L^c\cap D_{-t}}\left|\m\psi(\phi_{-t}(x))\det\frac
   {\partial \phi_{-t}(x)}{\partial x}-\psi(x)\m\right|\m\dd x.
\end{equation}
Note that if the expression in the last integral is non-zero, then we
must have $x\in K$. Hence, \rfb{eq:str_cont_1} implies that
\begin{equation} \label{eq:srt_cont_2}
   \|\tline_t\psi-\psi\|_{L^1(\L^c)} \m=\m \int_K\left|\m\psi
   (\phi_{-t}(x))\det\frac{\partial\phi_{-t}(x)}{\partial x}-\psi(x)
   \m\right|\dd x.
\end{equation}
Since $\psi\in C_0^\infty(\L^c)$, $\phi_{-t}\in C^1(D_{-t};\R^n)$, 
and $\phi_0(x)=x$, it follows that
\begin{equation} \label{eq:cont_Jac}
   \lim_{t\to 0,\m t\geq 0} \psi(\phi_{-t}(x))\det\frac{\partial 
   \phi_{-t}(x)}{\partial x}\m=\m\psi(x),
\end{equation}
uniformly with respect to $x\in K$. Therefore, combining 
\rfb{eq:srt_cont_2} and \rfb{eq:cont_Jac}, we get that
$$ \lim_{t\to0,\m t\geq0} \|\tline_t\psi-\psi\|_{L^1(\L^c)} \m=\m 0 
   \FORALL \psi\in C_0^\infty(\L^c),$$
since the integrals are over the compact set $K$. The set $C_0^\infty
(\L^c)\subset L^1(\L^c)$ is dense, thus, using (b), we have that 
$\lim_{t\to 0,\m t\geq 0}\tline_t\rho=\rho$ for all $\rho\in 
L^1(\L^c)$.

(c) For any $\rho\in L^1(\L^c)$, $\rho\geq 0$, we have $\rho=|\rho|$.
Thus, from \rfb{eq:abs}, we get
$$ \tline_t\rho \m=\m \tline_t|\rho|\m=\m|\tline\rho|\geq0 \FORALL 
   \rho\in L^1(\L^c), \ \rho\geq 0.$$
This proves that the operators $\{\tline_t\}_{t\geq0}$ are positive.
\hfill \qed

\begin{remark}
In \cite[Sect.~7.4]{Lasota1998} the Perron-Frobenius semigroup is
defined for a general semidynamical system, which evolves on a
Hausdorff space $X$, and whose trajectories are defined in $X$ for all
$t\geq0$. Thus, their framework is more general in some respects and
less general in others. Under these assumptions, they prove some of
the properties in Theorem~\ref{thm:sem}, except for strong continuity
and contraction. The strong continuity property is proved in
\cite[Sec.~7.6]{Lasota1998} assuming that the semidynamical system is
the flow of a system as in \rfb{eq:ODE} with $D_t=\R^n$ for all
$t\geq0$.  The strong continuity of the Perron-Frobenius semigroup is
discussed in \cite{Navas2002}, in a different framework.
\end{remark}

\begin{remark}
In this remark we assume that $\L=\emptyset$ in Definition
\ref{def:F-P}, so that $\tline_t:L^1(\R^n)\to L^1(\R^n)$. Since
$\L=\emptyset$ is invariant for \rfb{eq:ODE} also in backward time, we
can define the {\em backward Perron-Frobenius semigroup} corresponding
to \rfb{eq:ODE}, for $t\leq 0$ (again $\tline_t:L^1(\R^n)\to L^1(\R^n)
$) as in Definition~\ref{def:F-P}. It is tempting to think that
$\tline_{-t}=\tline_t^{-1}$, but, in general, this is not the case.
Indeed, if there exists a set $M\in\mathcal{L}(\R^n)$, with
$\mu(M)>0$, and a $t>0$ such that $M\cap D_t=\emptyset$, then for any
$\rho\in L^1(\R^n)$ that is supported on $M$, we have that
$\tline_t\rho=0$ (this follows from \rfb{eq:PF_D_t}). Thus, $\tline_t$
is not invertible and cannot be extended to a group. In the case of
$D_t=\R^n$ for one $t>0$ (and hence for all $t\geq0$) then $\tline_t$
is \textit{isometric}, i.e., $\|\tline_t\rho\|=\|\rho\|$ for each
$\rho\in L^1(\R^n)$ \cite[eq.~(7.4.4)]{Lasota1998}. Similarly, if
$D_t=\R^n$ for one $t<0$ (and hence for all $t\leq0$), then the
backward Perron-Frobenius semigroup is isometric. If both semigroups
(forward and backward) are isometric, then $\tline_{-t}=\tline_t^{-1}$
is true and the Perron-Frobenius semigroup can be extended to a group
of isometric operators on $L^1(\R^n)$.
\end{remark}

\begin{remark}
We think that the generator of the semigroup $\tline_t$ from
Definition~\ref{def:F-P} is \vspace{-2mm}
\begin{equation} \label{eq:gen}
   \mathbb{A}:\Dscr(\mathbb{A})\to L^1(\L^c),\qquad 
   \mathbb{A} \rho\m=\m-\nabla\cdot(f\rho),
\end{equation}
where 
$$ \Dscr(\mathbb{A}) \m=\m \{\rho\in L^1(\L^c) \ | \ \nabla
   \cdot(f\rho)\in L^1(\L^c)\}.$$

A proof of \rfb{eq:gen}, without explicit description of
$\Dscr(\mathbb{A})$, is given in \cite[Section~7.6]{Lasota1998}. Their
proof exploits the duality between the Koopman operator and the
Perron-Frobenius operator. A correct description of
$\Dscr(\mathbb{A})$ seems to be lacking in the literature. We will not
investigate deeper questions about the generator here, since it is not
needed in this paper.

The expression $\nabla\cdot(f\rho)$ in the above formula is defined in
the sense of distributions, i.e.,
$$ \int_{\L^c}\nabla\cdot(f\rho)\varphi\m\dd x \m=\m -\int_{\L^c} f
   \rho \cdot (\nabla \varphi) \m\dd x \FORALL \varphi\in 
   C_0^\infty(\L^c).$$
Since $f\rho\in L^1_{\rm loc}(\L^c)$ for any $\rho\in L^1(\L^c)$ and
any $f\in C^1(\R^n;\R^n)$, the last integral is well-defined. Spaces
similar to our $\Dscr(\mathbb{A})$ are defined in \cite
[Ch.~9]{Dautray1990}. The domain $\Dscr(\mathbb{A})$ is not given in
\cite{Karabacak2018,Lasota1998,Vaidya2008} and it is given
inaccurately in \cite{Rajaram2015,Rajaram2013,Rajaram2010}, since it
is required that $\rho\in L^1(X)$ and $\nabla\rho\in L^1(X)$, which
leads to a non-closed operator $\mathbb{A}$. (Here $X\subset\R^n$ is a
forward invariant set for \rfb{eq:ODE}, whose existence is required in
\cite{Rajaram2015,Rajaram2013,Rajaram2010}.)
\end{remark}

\section{Main Result} \label{sec:4} 

We prove that the strong stability of the Perron-Frobenius semigroup
on certain domains is equivalent to the set $\L\subset \R^n$ being an
almost global attractor in the sense of Definition~\ref{def:aGAS_R}. A
key step for our proof is to show that, given an almost global
attractor $\L$, in any $\e$-neighborhood of $\L$ we can find an almost
global attractor in finite time, as shown in
Proposition~\ref{prop_L_e}.

\begin{definition} \label{def:aGAS_ft}
An invariant set $\L\subset\rline^n$ is an \textit{almost 
global attractor in finite time} for the system \rfb{eq:ODE} if,
for almost every initial state $x_0\in \R^n$, there exists
a $T\geq0$ such that $\phi_{T}(x_0)\in\L$.
\end{definition}

\begin{proposition}\label{prop_L_e}
Let $\L\subset\R^n$ be an almost global attractor for the system
\rfb{eq:ODE}. Then, for every $\e>0$ the set
\begin{equation} \label{eq:aGAS_ft}
   \L_\e \m=\m \{x\in \R^n \ | \ d(\phi_t(x),\L) \leq\e \ \forall \ 
   t\geq0\}.
\end{equation}
is an almost global attractor in finite time for the system
\rfb{eq:ODE}.
\end{proposition}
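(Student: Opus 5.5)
Two things have to be checked: that $\L_\e$ is an invariant set in the sense of Definition~\ref{def:aGAS} (closed and forward invariant), and that almost every trajectory enters $\L_\e$ in finite time.

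\emph{Invariance.} Let $x\in\L_\e$. Then $d(\phi_t(x),\L)\le\e<\infty$ for all $t\ge0$, so $\sup I_x=\infty$. For $s\ge0$ the group property gives $\phi_t(\phi_s(x))=\phi_{t+s}(x)$, and hence $d(\phi_t(\phi_s(x)),\L)\le\e$ for all $t\ge0$. Thus $\phi_s(x)\in\L_\e$. (If $\L=\emptyset$, then $\L_\e=\emptyset$. But $\emptyset$ cannot be an almost global attractor, so $\L\neq\emptyset$ and $d(\cdot,\L)$ is continuous.)

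\emph{Closedness.} I plan to write
$$\L_\e=\bigcap_{t\ge0}\{x\in D_t \ | \ d(\phi_t(x),\L)\le\e\}.$$
Each set in this intersection is relatively closed in the open set $D_t$, so it is not obviously closed in $\R^n$. This is the main obstacle. Take $x_k\in\L_\e$ with $x_k\to x$, and fix $T>0$. I need to show $T\in I_x$. Suppose instead that $\sup I_x=\tau\le T$. Then $\|\phi_s(x)\|\to\infty$ as $s\uparrow\tau$. Choose $s<\tau$ with $d(\phi_s(x),\L)>\e+1$. By continuity of the flow on the open set $D_s$, which contains $x$, we get $\phi_s(x_k)\to\phi_s(x)$. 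Hence $d(\phi_s(x_k),\L)>\e$ for large $k$, contradicting $x_k\in\L_\e$. Therefore $x\in D_t$ for every $t\ge0$. Continuity of $\phi_t$ on $D_t$ and of $d(\cdot,\L)$ then gives $d(\phi_t(x),\L)\le\e$, so $x\in\L_\e$. The blow-up argument here uses the standard fact that maximal solutions leave every compact set, together with $d(y,\L)\ge\|y\|-\|z_0\|$ for a fixed $z_0\in\L$.

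\emph{Finite-time attractivity.} Let
$$N=\{x_0 \ | \ \lim_{t\to\infty}d(\phi_t(x_0),\L)\neq0\}.$$
By \rfb{eq:attr}, $\mu(N)=0$. For $x_0\notin N$, the trajectory is defined for all $t\ge0$ (implicit in the limit existing) and there is $T$ with $d(\phi_t(x_0),\L)\le\e$ for all $t\ge T$. For every $t\ge0$,
$$d(\phi_t(\phi_T(x_0)),\L)=d(\phi_{t+T}(x_0),\L)\le\e,$$
so $\phi_T(x_0)\in\L_\e$. Thus every $x_0\in\R^n\setminus N$ reaches $\L_\e$ in finite time, which is Definition~\ref{def:aGAS_ft}.

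The only delicate point is the closedness in the presence of finite escape times; the rest follows directly from the definitions and the group property.
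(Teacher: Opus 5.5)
\textbf{Verdict: there is a genuine gap, in your closedness step.} Your invariance argument and your finite-time part are correct. The finite-time part is the paper's own argument: the paper simply asserts that $\L_\e$ is closed and invariant, then uses the group property exactly as you do.

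The step that fails is the inequality $d(y,\L)\ge\|y\|-\|z_0\|$ for a fixed $z_0\in\L$, which is false. For a fixed $z_0$ you only have the upper bound $d(y,\L)\le\|y\|+\|z_0\|$. The reverse triangle inequality gives only $d(y,\L)\ge\|y\|-\sup_{z\in\L}\|z\|$, which is useless unless $\L$ is bounded. So from $\|\phi_s(x)\|\to\infty$ you cannot choose $s<\tau$ with $d(\phi_s(x),\L)>\e+1$, since a solution can escape to infinity in finite time while staying close to an unbounded $\L$.

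This is not only a gap in the argument. In your reading, where membership in $\L_\e$ forces $\sup I_x=\infty$, closedness itself can fail. Take $n=2$, $\dot x_1=x_1^2(1-x_1x_2^2)$, $\dot x_2=0$, and $\L=\{x_1=0\}\cup\{x_1x_2^2=1\}$.
\begin{itemize}
\item $\L$ is a closed set of equilibria.
\item For $x_2=c\neq0$, every solution is global and converges to $(0,c)$ or $(1/c^2,c)$.
\item For $x_2=0$, the solutions with $x_1(0)\le0$ converge to the origin.
\item Only the points $(a,0)$ with $a>0$ escape (at time $1/a$), a null set, so $\L$ is an almost global attractor.
\item For $k\ge2$, the solution from $x_k=(1,1/k)$ increases monotonically from $1$ to $k^2$. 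At $x_1=y\in[1,k^2)$, its distance to the point $(y,1/\sqrt{y})\in\L$ is $1/\sqrt{y}-1/k<1$, so $x_k\in\L_1$.
\item However, $x_k\to(1,0)$, whose solution $x_1(t)=1/(1-t)$ blows up at $t=1$, so $(1,0)\notin\L_1$.
\end{itemize}
Hence $\L_1$ is not closed.

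There are two ways to repair the proof.
\begin{itemize}
\item Assume $\L$ is bounded. Your argument then works with $\|z_0\|$ replaced by $\sup_{z\in\L}\|z\|$, and both readings of \rfb{eq:aGAS_ft} coincide.
\item Read the condition in \rfb{eq:aGAS_ft} as imposed only for $t\in I_x\cap[0,\infty)$, with invariance understood ``as long as solutions exist''. Closedness then follows at once from the openness of $D_t$ and the continuity of $\phi_t$, with no blow-up argument. Your finite-time part is unchanged.
\end{itemize}
The paper's proof does not address this point either: it asserts closedness without argument.
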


\textit{Proof.} From \rfb{eq:aGAS_ft}, $\L_\e$ is closed and invariant
for \rfb{eq:ODE}. Let $x_0\in \R^n$ such that
$\lim_{t\to\infty}d(\phi_t(x_0),\L)=0$ (this holds for almost every
$x_0\in \R^n$). Then, from the definition of limit, for every $\e>0$
there exists $T\geq0$ such that $d(\phi_t(x_0),\L)\leq\e$ for all
$t\geq T$. It follows from the group property of the flow $\phi$ and
\rfb{eq:aGAS_ft} that $\phi_T(x_0)\in\L_\e$.  This shows that indeed
$\L_\e$ is an almost global attractor in finite time for
\rfb{eq:ODE}. \hfill \qed

\begin{figure} 
\centering \includegraphics[width=0.5\linewidth]{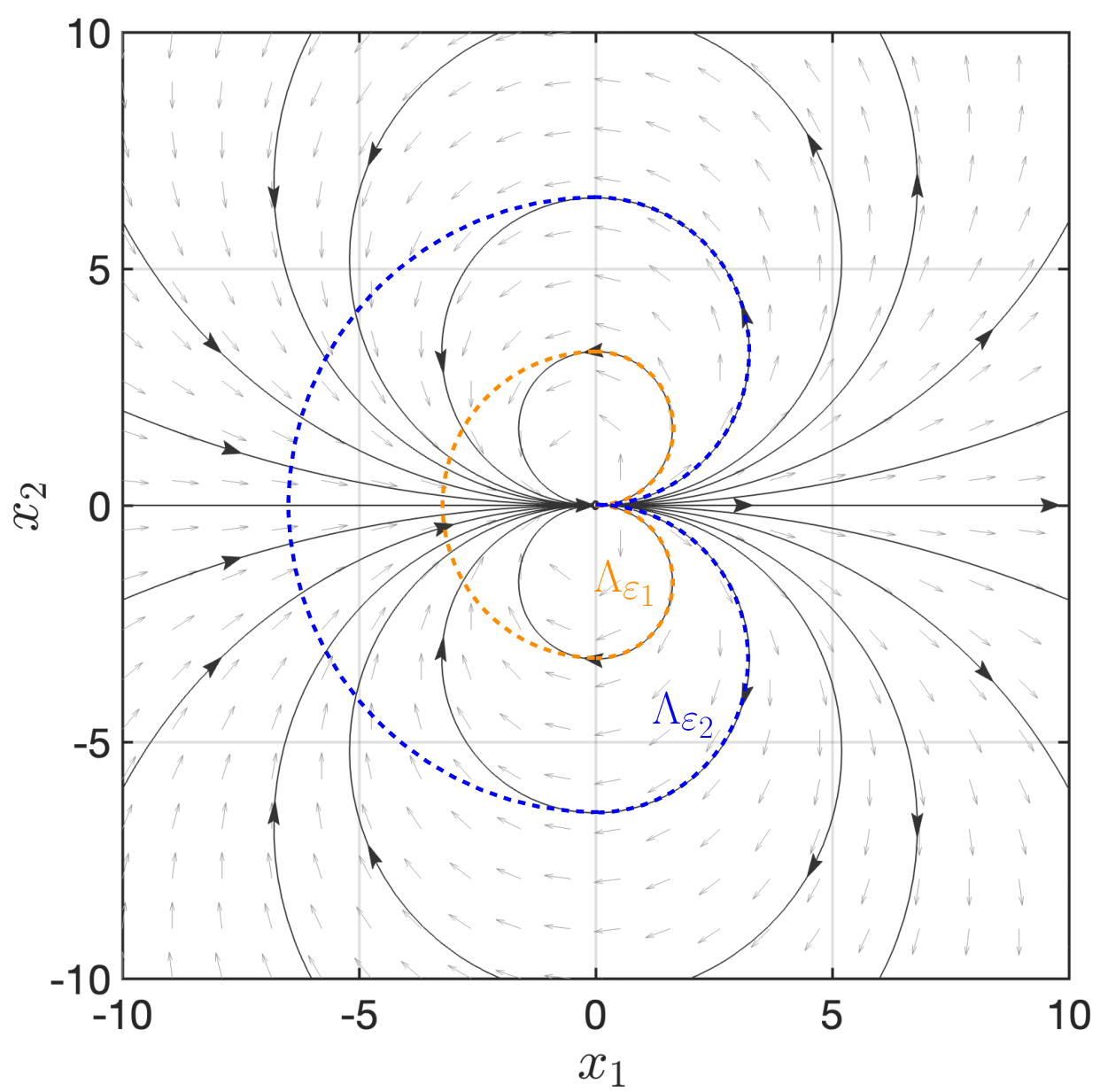}
\caption{Two possible choices of $\L_\e$ from \rfb{eq:aGAS_ft} for the 
system \rfb{eq:counter} (their boundaries are shown in orange and in
blue).} \label{fig:hearts}
\end{figure}

\vspace{3mm} \textit{Example~3 (cont'd).} Recall that the system
\rfb{eq:counter} has an almost global attractor set $\L=\{0\}$. In 
Figure~\ref{fig:hearts} we show two possible choices for an almost 
global attractor in finite time $\L_\e\subset\R^n$ for 
\rfb{eq:counter}, derived as in \rfb{eq:aGAS_ft}.

\vspace{3mm} We state below our main result. Recall from
Subsection~\ref{sec:prob} that, given a dynamical system as in
\rfb{eq:ODE}, for every $x_0\in\R^n$ the set $I_{x_0}\subset\R$
indicates the maximal open interval (in time) where $\phi_t(x_0)$ is 
defined. Therefore, the statement ``$\sup I_{x_0}<\infty$'' is
equivalent to saying ``the trajectory of \rfb{eq:ODE} starting from
the initial condition $x_0$ escapes in finite time''.

\begin{theorem}\label{thm:main}
For a dynamical system described by \rfb{eq:ODE} and a non-empty
invariant set $\L\subset\rline^n$, the following statements are 
equivalent: \vspace{-1mm}
\begin{itemize}
\item[\rm (i)]  For almost every initial condition $x_0\in\R^n$, 
\vspace{-2mm}
$$ \text{either} \qquad \sup I_{x_0}<\infty \qquad \text{or} \qquad 
   \lim_{t\to\infty}d(\phi_t(x_0),\L)=0.$$
\item[\rm (ii)] For every $\e>0$ the Perron-Frobenius semigroup 
$\tline_t:L^1(\L_\e^c)\to L^1(\L_\e^c)$, with $\L_\e$ as in 
\rfb{eq:aGAS_ft}, is strongly stable, i.e., 
\begin{equation} \label{eq:PF_str_st}
   \lim_{t\to\infty}\tline_t\rho \m=\m 0 \FORALL \rho\in L^1(\L_\e^c).
\end{equation}
\end{itemize}
\end{theorem}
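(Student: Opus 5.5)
The key identity comes from \rfb{eq:contr_1}, applied with $\L_\e$ in place of $\L$. For every $\rho\in L^1(\L_\e^c)$ and $t\geq 0$,
$$ \|\tline_t\rho\|_{L^1(\L_\e^c)} \m=\m \int_{\phi_t^{-1}(\L_\e^c)}|\rho(x)|\dd x .$$
This uses only that $\L_\e$ is closed and invariant, which is noted in the proof of Proposition~\ref{prop_L_e}. The sets $S_t=\phi_t^{-1}(\L_\e^c)$ are the initial states whose trajectory is still defined at time $t$ and lies outside $\L_\e$. Because $\L_\e$ is invariant, these sets decrease as $t$ grows: if $\phi_s(x)\in\L_\e$ or $s\notin I_x$, then the same holds at any later time $t>s$. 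Their intersection is
$$ S_\infty=\bigcap_{t\geq0}S_t=\{x \ | \ \sup I_x=\infty,\ \phi_t(x)\notin\L_\e\ \forall t\geq0\}.$$
The monotone convergence theorem (or dominated convergence with dominant $|\rho|$) then gives $\|\tline_t\rho\|\to\int_{S_\infty\cap\L_\e^c}|\rho|\dd x$. So strong stability of the semigroup on $L^1(\L_\e^c)$ is equivalent to $\mu(S_\infty\cap\L_\e^c)=0$. Since $S_\infty\subset\L_\e^c$, this is just $\mu(S_\infty)=0$. For the converse direction, take $\rho$ to be the indicator of a bounded piece of $S_\infty$ with positive measure.

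\textbf{(i)$\Rightarrow$(ii).} I would argue as in the proof of Proposition~\ref{prop_L_e}, but allowing escape. Let $x$ be one of the almost every points satisfying (i). If $\sup I_x<\infty$, then $x\notin S_\infty$. Otherwise $d(\phi_t(x),\L)\to0$, so there is $T$ with $d(\phi_t(x),\L)\leq\e$ for all $t\geq T$. By the group property $\phi_T(x)\in\L_\e$, hence $x\notin S_T\supset S_\infty$. Therefore $\mu(S_\infty)=0$ for each $\e$, and the key identity gives (ii).

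\textbf{(ii)$\Rightarrow$(i).} Set $\e=1/k$. By the key identity, (ii) gives $\mu(S_\infty^{(k)})=0$ for every $k$, so $N=\bigcup_k S_\infty^{(k)}$ is null. Take $x\notin N$ with $\sup I_x=\infty$. For each $k$ there is $T_k$ with $\phi_{T_k}(x)\in\L_{1/k}$. By the definition \rfb{eq:aGAS_ft} of $\L_{1/k}$ and the group property, $d(\phi_t(x),\L)\leq 1/k$ for all $t\geq T_k$. Since $k$ is arbitrary, $d(\phi_t(x),\L)\to0$, which is (i).

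\textbf{Main obstacle.} The step needing the most care is the key identity itself. Measurability of $S_t$ follows from Lemma~\ref{lmm:flow_ns}, and the sets are also open. The harder point is the precise monotonicity of $S_t$ in the presence of finite escape time. This is where invariance of $\L_\e$ and Lemma~\ref{lmm:sem_flow} are needed: from $S_{t+\tau}=\phi_\tau^{-1}(S_t)$ and $\phi_\tau^{-1}(\L_\e^c)\subset\L_\e^c$, I expect to get $S_{t+\tau}\subset S_t$. One also has to check that $S_\infty$ is exactly the set described above, which amounts to verifying that escape or entry into $\L_\e$ at some finite time excludes the point at all later times.
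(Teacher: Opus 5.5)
Your proof is correct, and it differs from the paper's only in the direction (i)$\Rightarrow$(ii).

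\textbf{(ii)$\Rightarrow$(i).} This is essentially the paper's argument. The paper's set $N_\e=\L_\e^c\setminus\bigcup_k A_k$ is exactly your $S_\infty$. The paper tests strong stability on the indicator of $N_\e\cap B_\delta$ using the same norm identity \rfb{eq:contr_1}, and then takes the union over $\e=1/m$.

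\textbf{(i)$\Rightarrow$(ii).} The paper does not pass to the limit in the norm identity. It introduces the subspaces $\mathcal{D}_k$ of densities supported on $A_k$, the points that escape or enter $\L_\e$ before time $k$. It shows that $\tline_k\mathcal{D}_k=\{0\}$, and that $\bigcup_k\mathcal{D}_k$ is dense in $L^1(\L_\e^c)$ because $\mu(N_\e)=0$. It then concludes by contractivity. This is the standard operator-theoretic mechanism: a dense set of vectors annihilated in finite time, together with uniform boundedness, gives strong stability.

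You instead use the monotonicity of $S_t=\phi_t^{-1}(\L_\e^c)$ and dominated convergence to obtain the exact formula
\[
\lim_{t\to\infty}\|\tline_t\rho\|_{L^1(\L_\e^c)}=\int_{S_\infty}|\rho(x)|\,\dd x,
\]
from which both implications follow at once.

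\textbf{What your route buys.} It is more economical, since the same identity serves both directions. It gives more information, because it identifies the non-decaying part of any density as its mass on $N_\e$. It also treats real $t\to\infty$ directly. The paper's density argument, as written, yields the limit only along integer times $k$, and one needs the extra (easy) remark that $\|\tline_t\rho\|\le\|\tline_{\lfloor t\rfloor}\rho\|$ by the semigroup and contraction properties.

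\textbf{What the paper's route buys.} It relies only on finite-time annihilation of a dense set plus contractivity, not on an explicit formula for $\|\tline_t\rho\|$. That is the form that would transfer to settings where such a formula is unavailable, although here \rfb{eq:contr_1} is available anyway.

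\textbf{Minor point.} The measurability of $S_t$ is best justified by its openness, which you also note, rather than by Lemma~\ref{lmm:flow_ns}.
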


\textit{Proof.} Assume (i). Let $\e>0$, and $\L_\e$ as in 
\rfb{eq:aGAS_ft}. For all $k\in\N$, denote \m $E_k \m=\m \{\m x_0\in
\R^n \ | \ \sup I_{x_0}<k \m\}$, 
\begin{equation} \label{eq:N_e}
   A_k \m=\m (\phi_k^{-1}(\L_\e)\cup E_k)\cap\L_\e^c,\qquad\ N_\e 
   \m=\m  \L_\e^c\setminus\bigcup_{k\in\N}A_k.
\end{equation}
Thus, $\bigcup_{k\in\N}A_k$ consists of those $x_0\in\L_\e^c$ for
which the trajectory $\phi_t(x_0)$ (for $t>0$) either blows up in
finite time or it enters $\L_\e$. From statement~(i) here, we have
$\mu(N_\e)=0$. Clearly $A_k\subset A_{k+1}\subset\L_\e^c$ for all
$k\in\N$. For any $k\in\N$ we introduce the set
$$ \mathcal{D}_k \m=\m \{\m\rho\in L^1(\L_\e^c) \ | \ \rho(x)=0 \ 
   \text{for} \ x\in \L_\e^c\setminus A_k \m\}.$$
We claim that $\tline_k\mathcal{D}_k=\{0\}$, for all $k\in\N$. Let
$A\subset\L_\e^c$, $A\in\mathcal{L}(\L^c)$, and $\rho\in\mathcal{D}_k$.
Then, using \rfb{eq:PF_def} and \rfb{eq:abs}, for all $k\in\N$ we
can write
$$ \left| \m\int_A[\tline_k\rho](x)\dd x \m\right| \leq\m \int_A| 
   [\tline_k\rho](x)|\dd x = \int_{\phi_k^{-1}(A)}|\rho(x)|\dd x
   \m= \int_{\phi_k^{-1}(A)\cap A_k}|\rho(x)|\dd x.$$
In the last step we have used that $\phi_k^{-1}(A)\subset\L_\e^c$.
Finally, since $\phi_k^{-1}(A)\subset\phi_k^{-1}(\L_\e^c)$ and
$\phi_k^{-1}(\L_\e^c)\cap A_k=\emptyset$ (by the definition of $A_k$),
$$ \left| \m\int_A[\tline_k\rho](x)\dd x \m\right| \m\leq\m 
   \int_{\phi_k^{-1}(A)\cap A_k} |\rho(x)|\dd x \m=\m 0.$$
Thus, $\tline_k\Dscr_k=\{0\}$, for all $k\in\N$. We claim that 
$\cup_{k\in\N}\Dscr_k$ is dense in $L^1(\L_\e^c)$. Let $\rho\in L^1
(\L_\e^c)$ and $\rho_k=\rho|_{A_k}$, the restriction of $\rho$ to 
$A_k$, for all $k\in\N$. Then
$$ \|\rho-\rho_k\|_{L^1(\L_\e^c)} \m=\m \int_{\L_\e^c}|\rho(x)-
   \rho_k(x)|\dd x \m=\m \int_{\L_\e^c\setminus A_k}|\rho(x)|\dd x.$$
The sequence of sets $\{A_\e^c \setminus A_k\}_{k\in\nline}$ is 
decreasing, and its intersection is the set $N_\e$ from \rfb{eq:N_e}.
Hence, by elementary integration theory,
$$ \lim_{k\to\infty}\|\rho-\rho_k\|_{L^1(\L_\e^c)} \m=\m \int_{N_\e}
   |\rho(x)|\dd x \m=\m 0,$$
proving that indeed $\cup_{k\in\N}\Dscr_k$ is dense in $L^1
(\L_\e^c)$. According to Theorem~\ref{thm:sem}, $\tline_k$ is a
contraction for all $k\in\N$. Thus, from $\cup_{k\in\N}\Dscr_k$ being
dense in $L^1(\L_\e^c)$, we have that $\lim_{k\to\infty}\tline_k\rho
=0$ for all $\rho\in L^1(\L_\e^c)$. This proves statement (ii).

Assume (ii). We denote $B_\delta=\{x\in\rline^n \ | \ \norm{x}<\delta
\}$, for any $\delta>0$. Let $\e>0$. We claim that $\mu(N_\e)=0$, with
$N_\e$ as in \rfb{eq:N_e}. We argue by contradiction. Assume that
$\mu(N_\e)>0$. Then, there exists a $\delta>0$ such that $\mu(N_\e\cap
B_\delta)>0$. Let $\rho$ be the characteristic function of $N_\e\cap
B_\delta$, so that $\rho\geq0$, $\rho\in L^1(\L_\e^c)$ and
$\|\rho\|>0$. If $x\in N_\e$, then $\phi_k(x)\notin\L_\e$ for all
$k\in\N$ and $x\notin E_k$ for all $k\in\N$. Thus, it must be that
$\phi_t(x)\in\L_\e^c$ for all $t\geq0$. Hence,
$\phi_t^{-1}(\L_\e^c)\supset N_\e$ for all $t\geq0$. Then,
$$ \|\tline_t\rho\|_{L^1(\L_\e^c)} \m=\m \int_{\phi_t^{-1}
   (\L_\e^c)}\rho(x)\dd x\m=\m \int_{N_\e}\rho(x)\dd x \m=\m
   \|\rho\|_{L^1(\L_\e^c)} \FORALL t \geq 0.$$
The above equality contradicts \rfb{eq:PF_str_st}. Therefore,
$\mu(N_\e)=0$, meaning that, for almost every $x_0\in\R^n$, either
$\sup I_{x_0}<\infty$ or there exists a $T\geq0$ such that
$\phi_{T}(x_0)\in\L_\e$.  This argument is valid for every $\e>0$. Let
$\e_m=\frac{1}{m}$, $m\in\N$. Clearly, $N_{\e_2}\subset N_{\e_1}$
for $\e_1<\e_2$. From the definitions of $N_\e$ and $A_k$, we have
$$ N = \bigcup_{m\in\N}N_{\e_m}=\bigcup_{m\in\N}\left(\L_{\e_m}^{c}
   \setminus\bigcup_{k\in\N}\left\{x\in\L_{\e_m}^{c} \ | \ \phi_k(x)
   \in\L_{\e_m} \ \text{or} \ \sup I_{x}<k\right\}\right).$$
From $\mu(N_{\e_m})=0$ for all $m\in\N$, it follows that $\mu(N)=0$.
Therefore, for every $x\in\L^{c}\setminus N$, we have that
$$ \text{either} \quad \sup I_x<\infty \quad \text{or} \quad 
   \limsup_{t\to\infty} d(\phi_t(x),\L)\leq\frac{1}{m} \quad 
   \forall \ m\in\N.$$ 
The above, combined with $\mu(N)=0$, implies statement (i). \hfill\qed

\begin{remark}
It is natural to ask why we need to introduce the sets $\L_\e$ for
Theorem~\ref{thm:main}, could we maybe work with $\L$ in place of
$\L_\e$? Unfortunately, this is not the case. Consider on $\R^n$ the
trivial system $\dot{x}=-x$, so that $\phi_t(x_0)=e^{-t}x_0$. The set
$\L=\{0\}$ is a global attractor, but on $L^1(\L^c)$ the
Perron-Frobenius semigroup is isometric and, thus, not strongly
stable.
\end{remark}

We present below a simple one-dimensional nonlinear system for which
many trajectories have finite escape time (i.e., $\sup
I_{x_0}<\infty$), yet the associated Perron-Frobenius semigroup is
strongly stable. (This example highlights the importance of allowing
finite escape time in statement~(i) of Theorem~\ref{thm:main}.)

\vspace{3mm}
\textit{Example~1 (cont'd).
As discussed after Definition~\ref{def:aGAS_R}, there are, in general,
many possible choices for an almost global attractor $\L$.  For
instance, for the system \rfb{eq:simple_sys} we could choose any set
of the form
\begin{equation} \label{eq:L_simple_sys}
\L \m=\m [-\delta,\delta] \qquad \text{for any} \ 
   \delta\in[0,1].
\end{equation}
Then, using the implication (i)$\Rightarrow$(ii) of Theorem
\ref{thm:main}, it follows that $\tline_t:L^1(\L_\e^c)\to L^1
(\L_\e^c)$ associated to the flow of the dynamical system
\rfb{eq:simple_sys} is strongly stable for every $\e>0$.
Alternatively, assuming that we wanted to investigate the almost
global attractivity property of $\L$ (without knowing it \textit{a
priori}), we could proceed as follows. From \rfb{eq:phi_ex1}, we can
compute
\begin{equation*}
   \det\frac{\partial\phi_{-t}(x)}{\partial x}=\frac{\sqrt{(1-x^2)
   e^{-2t}+x^2}-x^2\frac{1-e^{-2t}}{\sqrt{(1-x^2)e^{-2t}+x^2}}}
   {(1-x^2)e^{-2t}+x^2}.
\end{equation*}
Using the above in \rfb{eq:PF_expl}, an explicit expression for the
Perron-Frobenius semigroup associated to the flow of the system
\rfb{eq:simple_sys} can be obtained. At this point, condition
\rfb{eq:PF_str_st} can be checked and the implication
(ii)$\Rightarrow$(i) of Theorem~\ref{thm:main} can be used to infer
almost global attractivity of $\L$ from \rfb{eq:L_simple_sys}.}

\begin{definition} \label{def:for_inv}
A system \rfb{eq:ODE} is \textit{almost forward complete} if
$$\mu(\{x_0\in \rline^n \ | \ \sup I_{x_0}<\infty\}) \m=\m 0,$$
i.e., for almost every $x_0\in\R^n$ the flow $\phi_t(x_0)$ is defined
for all $t\geq 0$.
\end{definition}

\begin{corollary} \label{cor:main}
Given an almost forward complete system \rfb{eq:ODE} and a non-empty
invariant set $\L\subset\rline^n$ for \rfb{eq:ODE}, the following
are equivalent:
\begin{itemize}
\item[\rm (i)] $\L$ is an almost global attractor for the system 
\rfb{eq:ODE}.
\item[\rm (ii)] For every $\e>0$ the Perron-Frobenius semigroup 
$\tline_t:L^1(\L_\e^c)\to L^1(\L_\e^c)$, with $\L_\e$ as in 
\rfb{eq:aGAS_ft}, is strongly stable
\end{itemize}
\end{corollary}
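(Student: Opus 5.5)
The plan is to obtain Corollary~\ref{cor:main} directly from Theorem~\ref{thm:main}. Almost forward completeness makes the escape-time alternative in statement (i) of that theorem irrelevant up to a null set. Concretely, let
$$ E \m=\m \{x_0\in\R^n \ | \ \sup I_{x_0}<\infty\}, \qquad G \m=\m \{x_0\in\R^n \ | \ \lim_{t\to\infty}d(\phi_t(x_0),\L)=0\},$$
where membership in $G$ requires $\sup I_{x_0}=\infty$, so that the limit makes sense. By Definition~\ref{def:for_inv}, $\mu(E)=0$. Statement (ii) of the corollary coincides word for word with statement (ii) of Theorem~\ref{thm:main}. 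It therefore suffices to show that statement (i) of the corollary is equivalent to statement (i) of the theorem under the hypothesis $\mu(E)=0$.

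Suppose first that $\L$ is an almost global attractor, that is, $\mu(G^c)=0$ by Definition~\ref{def:aGAS_R}. Then the set of $x_0$ for which neither alternative of Theorem~\ref{thm:main}(i) holds is $(E\cup G)^c\subset G^c$. This set has measure zero, so Theorem~\ref{thm:main}(i) holds. Note that this direction does not use almost forward completeness.

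Conversely, suppose Theorem~\ref{thm:main}(i) holds, that is, $\mu((E\cup G)^c)=0$. Then
$$ G^c \m\subset\m (E\cup G)^c\cup E ,$$
which is a union of two null sets, so $\mu(G^c)=0$ and $\L$ is an almost global attractor. Combining the two directions with Theorem~\ref{thm:main} yields the equivalence (i)$\Leftrightarrow$(ii) of the corollary.

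The only step needing any care is a measurability issue. The set in \rfb{eq:attr} should be read as containing the points with finite escape time, since for those points the limit is undefined. Alternatively, one can use completeness of the Lebesgue measure, so that subsets of the null set $E\cup(E\cup G)^c$ are measurable with measure zero. With completeness in hand, the argument is just set inclusions, and no real obstacle arises.
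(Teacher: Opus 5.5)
Your proof is correct and follows the paper's route: the paper says only that the corollary ``follows easily from Theorem~\ref{thm:main}''. You supply the null-set bookkeeping that remark leaves implicit, namely that $\mu(E)=0$ makes $\mu((E\cup G)^c)=0$ equivalent to $\mu(G^c)=0$, and you correctly note that reading finite-escape points as lying in the exceptional set of the almost-global-attractor definition is what makes the completeness hypothesis matter.
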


This corollary follows easily from Theorem~\ref{thm:main}.

\vspace{3mm} \textit{Example~3 (cont'd).} The system \rfb{eq:counter}
is almost forward complete since $\mu(\{x_0\in \rline^2 \ | \ \sup
I_{x_0}<\infty\})=\mu(\Omega)=0$. (Here $\Omega\subset\R^n$ is the set
introduced after \rfb{eq:counter}.) Thus, choosing, e.g., $\L=\{0\}$,
the Perron-Frobenius semigroup $\tline_t:L^1(\L_\e^c)\to L^1(\L_\e^c)$
associated to \rfb{eq:counter} is strongly stable for every $\e>0$.

\begin{remark} \label{Herzliya}
There is a connection between our results and the famous density
function theory of A.~Rantzer \cite{Rantzer2001,Rantzer2002}. We
only outline the idea, without giving rigorous arguments. Suppose
that the assumptions and condition (ii) of Corollary \ref{cor:main} 
are satisfied with $\L=\{0\}$. (According to this corollary, also 
condition (i) is satisfied.) Let $b\in L^1(\rline^n)$ be an almost
everywhere strictly positive and continuous function on $\L^c$ such
that for any $\e>0$, denoting by $\tline_t$ the Perron-Frobenius 
semigroup on $L^1(\L^c_\e)$, the limit \vspace{-1mm}
\begin{equation} \label{Sinwar}
   R_0 b \m=\m \lim_{T\rarrow\infty} \int_0^T \tline_t b \m\dd t
\end{equation}
exists in $L^1(\L^c_\e)$. The existence of such $b$ can be derived
from condition (ii). Such a function $b$ has to be zero on any closed
invariant set contained in $\L^c$ (such sets must have measure zero).
In fact, \rfb{Sinwar} defines an almost everywhere strictly positive 
function $\rho$ on all of $\L^c$, because every point in $\L^c$ is 
contained in some set $\L^c_\e$ with $\e>0$, and we impose that the 
restriction of $\rho$ to $\L^c_\e$ is $R_0 b$. The function $\rho$
does not have to be in $L^1(\L^c)$, but of course its restriction to 
the exterior of any ball of radius $\e>0$ is integrable. Denoting the
generator of $\tline_t$ by $\mathbb{A}$, we know that for any complex 
number $s$ with $\Re s>0$, \vspace{-2mm}
$$ R_s b \m=\m (sI-\mathbb{A})^{-1}b \m=\m \lim_{T\rarrow\infty} 
   \int_0^T e^{-st} \tline_t b \m\dd t \m=\m \int_0^\infty e^{-st} 
   \tline_t b \m\dd t \m.$$
Clearly $R_s b$ satisfies $(sI-\mathbb{A})R_s b = b$. Taking limits as 
$s\rarrow 0$, we get that $-\mathbb{A}\rho=b$, which implies Rantzer's 
inequality: $-\mathbb{A}\rho>0$ almost everywhere on $\L^c$, or more 
explicitly, $\nabla(f\rho)>0$ almost everywhere on $\L^c$. 

A challenge is to prove the implication in the converse direction, 
i.e., to derive our condition (ii) from Rantzer's inequality, and to
do all this for an arbitrary closed invariant set $\L$.
\end{remark}

\section{Concluding remarks} \label{sec:5} 

We have shown the equivalence between the strong stability of certain
Perron-Frobenius semigroups and the almost global attractivity
property of invariant sets for nonlinear dynamical systems. To this
aim, we constructed a more general framework than what was available
in the literature: We do not assume that the flow of the system is
nonsingular (rather, we show that it is the case for a $C^1$ vector
field), we do not assume the existence of a forward invariant compact
set, and we do not assume local stability of the attractor. We rely
only on basic notions from dynamical systems and measure theory.

Our stability result is intended as a first step towards a more
comprehensive theory for operator-based tools for the analysis and
control of nonlinear systems, via the connection with the theory of
density functions of A.~Rantzer (see Remark \ref{Herzliya}). In
particular, we hope that possible applications may stem from power
systems, since, as shown in, e.g., \cite{Bara2017,LorRei24,
Natarajan2018,Weiss2019}, a synchronous generator can be modeled (in
its simplest instance) as a damped pendulum, which, in turn, has an
almost global attractors. For this, the equivalent characterization
proposed in Theorem~4.3 could inspire numerical algorithms, e.g., in
the spirit of those developed in \cite{Rajaram2010}, to study the
almost global stability properties of synchronous generators. This is
particularly relevant since, as discussed in \cite{Angeli2003},
studying the almost global attractivity properties of saddle points
(e.g., stable equilibrium points of damped pendulums) using density
functions is not recommended, so that we hope to find a more suitable
approach using operator-based tools.

Future works include extending the results from \cite{Rajaram2010} to
our more general framework, and possible applications of our theory to
solve control problems for systems with almost global attractors. For
instance, given a dynamical system $\dot{x}=f(x,u)$, with $x\in\R^n$,
$u\in\R^m$, and a set $\L\subset\R^n$, it would be interesting to
study under which conditions there exists a control input
$u:[0,\infty)\to \R^m$ such that $\L$ is an almost global attractor
set. Finally, we think that statement (ii) of Theorem~\ref{thm:main}
may be replaced with the following: The Perron-Frobenius semigroup on
$L^1(\rline^n)$ converges to a Radon measure supported on $\L$, in the
sense of weak$^*$ convergence, i.e., denoting by $C_0(\R^n)$ the set
of continuous functions with compact support, for any initial density
$\rho\in L^1(\R^n)$ and for any $\varphi\in C_0(\R^n)$, the limit
$\lim_{t\to\infty} \int_{\R^n}\varphi(x)[\tline_t\rho](x)\dd x$ exists
and it defines a Radon measure supported on $\L$.

\section*{Acknowledgement}

This research has been supported by the Israel Science Foundation
(ISF), grant no. 2802/21.


\end{document}